\documentclass[11pt, reqno]{amsart}
\usepackage{amssymb,latexsym,amsmath,amscd,amsthm,graphicx, color}
\usepackage{enumitem}
\usepackage[all]{xy}
\usepackage{pgf,tikz}
\usepackage{mathrsfs}
\usetikzlibrary{arrows}
\usepackage{pgfplots}
\usepackage{lineno}
\usepackage{soul}
\pgfplotsset{compat=1.15}
\usepackage[left=1 in, top=0.6 in, right=1 in, bottom=0.3 in]{geometry}
\usepackage{changepage}
\usepackage{float}        
\usepackage{subcaption}
\usepackage{geometry}
 \usepackage{algorithm}
 \usepackage{algpseudocode}
 \usepackage{bm}
 \usepackage{textcomp}
 \usepackage{booktabs}
 \usepackage[numbers]{natbib}
 
\pgfplotsset{compat=1.18} 

\allowdisplaybreaks
\makeatletter
\newcommand{\setword}[2]{%
	\phantomsection
	#1\def\@currentlabel{\unexpanded{#1}}\label{#2}%
}
\makeatother

\usepackage[linkcolor=blue, urlcolor=blue, citecolor=blue,
colorlinks, bookmarks]{hyperref}

\definecolor{uuuuuu}{rgb}{0.26666666666666666,0.26666666666666666,0.26666666666666666}
\definecolor{xdxdff}{rgb}{0.49019607843137253,0.49019607843137253,1.}
\definecolor{ffqqqq}{rgb}{1.,0.,0.}
\definecolor{ffqqqq}{rgb}{1.,0.,0.}
\definecolor{ffxfqq}{rgb}{1.,0.4980392156862745,0.}

\definecolor{uuuuuu}{rgb}{0.26666666666666666,0.26666666666666666,0.26666666666666666}
\definecolor{qqwuqq}{rgb}{0.,0.39215686274509803,0.}
\definecolor{zzttqq}{rgb}{0.6,0.2,0.}
\definecolor{xdxdff}{rgb}{0.49019607843137253,0.49019607843137253,1.}
\definecolor{qqqqff}{rgb}{0.,0.,1.}
\definecolor{cqcqcq}{rgb}{0.7529411764705882,0.7529411764705882,0.7529411764705882}
\definecolor{sqsqsq}{rgb}{0.12549019607843137,0.12549019607843137,0.12549019607843137}
\definecolor{uuuuuu}{rgb}{0.26666666666666666,0.26666666666666666,0.26666666666666666}
\definecolor{ffqqqq}{rgb}{1,0,0}
\definecolor{xdxdff}{rgb}{0.49019607843137253,0.49019607843137253,1}

\definecolor{yqqqyq}{rgb}{0.5019607843137255,0,0.5019607843137255}
\definecolor{qqqqff}{rgb}{0,0,1}
\definecolor{ffqqqq}{rgb}{1,0,0}
\definecolor{ffqqff}{rgb}{1,0,1}

\theoremstyle{plain}

\newtheorem{theorem}[subsection]{Theorem}

\newtheorem{corollary}[subsection]{Corollary}
\newtheorem{lemma}[subsection]{Lemma}
\newtheorem{definition}[subsection]{Definition}

\newtheorem{proposition}[subsection]{Proposition}

\theoremstyle{definition}

\newtheorem{remark}[subsection]{Remark}

\newcommand{\D}[1]{\mathbb{#1}}

\renewcommand{\for}[1]{\textbf{for}~#1}

\begin{document}

	\title{ Linear Stability analysis of the Lloyd algorithm on a circle}
 
 \author{$^1$Silpi Saha}
  \author{$^2$Sangita Jha}
	\author{$^3$Mrinal Kanti Roychowdhury}
 
		\address{$^{1, 2}$Department of Mathematics, National Institute of Technology Rourkela\\
		Rourkela, India 769008.}
 	\address{$^{3}$School of Mathematical and Statistical Sciences\\
		University of Texas Rio Grande Valley\\
		1201 West University Drive\\
		Edinburg, TX 78539-2999, USA.}

	\email{$^1$524ma1007@nitrkl.ac.in, $^2$jhasa@nitrkl.ac.in, $^3$mrinal.roychowdhury@utrgv.edu}

	\subjclass[2020]{37C75, 34D20, 34D08, 37M20, 53D50}
	\keywords{Linear Stability Analysis, Flip Bifurcations, Fixed points, Lloyd Map, Discrete Fourier modes}
	
	\date{}

	\pagestyle{myheadings}\markboth{Silpi Saha, Sangita Jha, Mrinal Kanti Roychowdhury}{Linear Stability analysis of the Lloyd algorithm on a circle}
 \begin{abstract}
 Lloyd algorithm is the standard iterative method for computing quantizers and codebooks in source coding and vector quantization. In this article, we study the dynamical and stability properties of the Lloyd map on the unit circle $\mathbb S^1$ using von Mises distributions.  We construct the Lloyd iteration as a discrete dynamical system on the configuration space of ordered point sets modulo rotational symmetry. Also, we study the rotational equivarience of the Lloyd map. Further, we derive an explicit representation of the Jacobian matrix and prove that it possesses a circulant structure for the equally spaced configuration. Also, we study the bifurcation characteristics based on Lloyd map analysis.
In the end, we provide the numerical algorithms for stability diagrams, Lyapunov spectrum estimation, and residue analysis, purely for empirical visualization. Our results provide a dynamical systems framework for Lloyd quantization on $\mathbb S^1$ for studying stability properties.
 \end{abstract}
 \maketitle

\section{Introduction}

Stability analysis is a fundamental method in dynamical systems that determines whether a fixed point attracts or repels nearby trajectories as time evolves. The most common approach, known as linear stability analysis, approximates the local behaviour of a nonlinear system by examining how infinitesimally small perturbations grow or decay near a fixed point \cite{strogatz2018nonlinear}.
We achieve this by analyzing the Jacobian matrix, whose eigenvalues determine the stability \cite{strogatz2018nonlinear, grimshaw2017nonlinear}. Linear stability analysis is very powerful because it reduces complex nonlinear problems to simple matrix algebra, valid for small disturbances and provides local information. We study linear stability analysis coupled with mathematical theory of quantization in our current work.
Quantization plays a central role in information theory, signal processing, data compression, pattern recognition, and machine learning. The fundamental objective of quantization is to approximate a continuous source by a finite collection of representative points, called codepoints, so that the average distortion between the source and its quantized representation is minimized \cite{graf}. The mathematical theory of quantization has been extensively developed over the past several decades, with several scientific and engineering applications. Important contributions include the asymptotic theory of optimal quantizers, centroidal voronoi tessellations, and distortion minimization principles, and algorithm-based analysis \cite{gersho1991vector, gray1998quantization, voroo, LindeBuzoGray1980}. Among all quantization algorithms, the Lloyd algorithm occupies a particularly important position. The algorithm was originally introduced by Stuart P. Lloyd in his pioneering Bell Laboratories report and later published in reference~\cite{lloyd1982least}. The algorithm provides an iterative procedure for constructing locally optimal quantizers. At each iteration, the source space is partitioned into voronoi cells determined by the current codebook, and each codepoint is updated to the centroid of its corresponding cell. The method is a minimization technique. In this way, the distortion decreases monotonically and forms the basis of numerous algorithms in vector quantization, clustering, image compression, and unsupervised learning \cite{gersho1991vector, gray1998quantization, macqueen1967some}. The convergence theory of the Lloyd algorithm has been studied extensively. Classical results established conditions for convergence to stationary configurations and analyzed rates of convergence under regularity assumptions on the source distribution \cite{sabin1986global, kieffer1982exponential}. Subsequent investigations demonstrated that the convergence behaviour depends strongly on initialization, geometry of the source distribution, and the structure of the distortion distribution \cite{li2000convergence}. Nevertheless, the Lloyd iteration has not traditionally been viewed primarily as a stable fixed-point method that corresponds to stability analysis. In parallel, nonlinear dynamical phenomena such as bifurcations, oscillations, and chaotic behaviour have been observed in several iterative algorithms arising in statistics, optimization, and machine learning. Examples include expectation-maximization schemes, adaptive learning algorithms, and neural network training dynamics \cite{berlyand2016convergence, saad1996online}. These developments suggest that iterative optimization procedures may exhibit considerably stronger dynamical behaviour. We study quantization using the help of Lloyd algorithm and linear stability analysis, as it could open the door for new possiblities.

Our present work investigates the Lloyd algorithm on the circle $\mathbb{S}^1$ for nonuniform probability densities, with particular emphasis on the von Mises distribution, the canonical circular analogue of the Gaussian distribution. The concentration parameter $\kappa$ of the von Mises density naturally acts as a control parameter governing localization of the source distribution. A principal contribution of this work is the formulation of the Lloyd iteration as a discrete-time dynamical system on the manifold of ordered circular configurations modulo rotational symmetry. We derive explicit expressions for the Jacobian of the Lloyd map at symmetric fixed points, and show that the linearized dynamics possess a circulant and periodic tridiagonal structure generated by the rotational invariance of the problem. The stability criteria for equally spaced quantizers on the circle are also being analyzed here. Our analysis further establishes that rotational symmetry induces a neutral mode corresponding to rigid rotations of the entire configuration. The remaining spectrum determines the true dynamical stability of the quantizer after removing this neutral direction through quotient reduction. A detailed study of the eigenvalue spectrum yields the condition for the occurrence of flip (period-doubling) instabilities. We also present 3 different algorithms based on Lloyd iteration, corresponding to which we plot graphs for pure empirical visualization of our theoretical results. Overall, our work establishes a dynamical-systems framework for analyzing quantization algorithms on manifolds and reveals how symmetry, geometry, and source concentration jointly influence the stability properties of quantizers.

 The remainder of this paper is organized as follows. Section~\ref{sec:sec_2} establishes the necessary preliminaries, including the Lloyd algorithm on a circle and the von Mises distribution. Section~\ref{sec:sec_3} presents analysis of symmetric fixed points. Section~\ref{sec:sec_4} discusses about the linear stability analysis. Section~\ref{sec:sec_5} deals with the applications, algorithmic and graphical analysis. Section~\ref{sec:sec_6} discusses the implications of our findings and concludes with directions for future research.

\section{Preliminaries}
\label{sec:sec_2}
In this section, we review the fundamentals of quantization on a circle, structure of voronoi cells, and the Lloyd algorithm. We also discuss about circulant matrix, periodic tridiagonal matrices.
\begin{definition}
    A great circle on a sphere is formed where the sphere is cut by a plane passing through its center. It can also be described as a circle drawn on the sphere whose center is the same as the center of the sphere.
\end{definition}
\begin{definition}\cite{mk}
     Let $\D S^2:= \{ x \in \mathbb{R}^3 : \|x\| = 1 \}$
be the unit sphere, equipped with the \emph{geodesic distance} $d_G$. For two points $x, y \in \D S^2$, this distance is given by the central angle:
\begin{equation*}
d_G(x, y) = \arccos(\langle x, y \rangle),
\end{equation*}
where $\langle \cdot, \cdot \rangle$ is the standard inner product in $\mathbb{R}^3$. In spherical coordinates $(\phi, \theta)$, where $\phi \in [-\pi/2, \pi/2]$ is the latitude and $\theta \in [0, 2\pi)$ is the longitude, with
\begin{equation*}
x(\phi, \theta) = (\cos\phi \cos\theta, \cos\phi \sin\theta, \sin\phi).
\end{equation*}
The geodesic distance between $x_1=(\phi_1, \theta_1)$ and $x_2=(\phi_2, \theta_2)$ becomes
\begin{equation*}
d_G(x_1, x_2) = \arccos\left( \sin\phi_1 \sin\phi_2 + \cos\phi_1 \cos\phi_2 \cos(\theta_1 - \theta_2) \right).
\end{equation*}
\end{definition}
\begin{definition}
The distortion for a quantizer $Q$ is given by
\[
D(Q) = \sum_{j=1}^n \int_{R_j(Q)} d_G(\theta, q_j)^2 h(\theta) d\theta,
\]
where $d_G(\phi, \theta) = \min(|\phi-\theta|, 2\pi-|\phi-\theta|)$ is the geodesic distance. The Lloyd algorithm iteratively minimizes this distortion by alternating between voronoi partition updates and centroid updates.
\end{definition}

    In the above definition, we assume $\Gamma \subset \mathbb{S}^{2}$ be a great circle of length $L = 2\pi$, parameterized by the central angle $\theta \in [0,2\pi)$ measured from a reference point. For a fixed number of codepoints $n\geq 2$, a codebook is an ordered $n$-tuple $Q = (q_{1},q_{2},\ldots ,q_{n})\in \mathbb{T}^{n}$, where $\mathbb{T}^{n} = [0,2\pi)^{n}$ accounts for periodic boundary conditions. We assume without loss of generality that $0\leq q_{1}< q_{2}< \dots < q_{n}< 2\pi$ at initialization. We also assume that $h$ is our corresponding nonuniform probability density function and the associated Borel probability measure is $P$. $R_j(Q)$ is the voronoi cell, which corresponds to each codepoint. We define it more prominently in the upcoming definitions.

\begin{definition}\cite{decormo}
Let \(\alpha = \{a_1, \dots, a_n\} \subset \D S^2\) be a codebook. 
The sphere can be partitioned into \emph{spherical voronoi regions} defined by

\[
R(a_i \mid \alpha) = \Bigl\{ x \in \D S^2 : d_G(x, a_i) \leq d_G(x, a_j) \ 
\forall \, j \neq i, \; 1 \leq j \leq n \Bigr\}.
\]
\end{definition}
Each region \(R(a_i \mid \alpha)\) contains all points on the sphere whose geodesic distance to \(a_i\) is less than or equal to the distance to any other codepoint in \(\alpha\). We consider all voronoi cells are contained in arcs of length strictly less than $\pi$. This ensures that the intrinsic distance is realized by a unique minimizing geodesic between any two points in the same voronoi cell.

Hence, by using the above convention, we can write $d_G(\phi, \theta)=|\phi-\theta|$ (with appropriate identification modulo $2\pi$). For each codepoint $q_j$, define its voronoi cell
\[
R_j(Q) = \{\theta \in [0,2\pi): |\theta - q_j| \leq |\theta - q_k| \text{ for all } k \neq j\}.
\]
 On a circle, these cells are arcs whose boundaries are the midpoints between consecutive codepoints.
\begin{definition}
The one-dimensional sphere, denoted by \(\mathbb{S}^1\), is the set of all points in \(\mathbb{R}^2\) at unit distance from the origin:
\[
\mathbb{S}^1 = \{(x,y) \in \mathbb{R}^2 : x^2 + y^2 = 1\}.
\]
Equivalently, \(\mathbb{S}^1\) can be parametrized by an angular coordinate \(\theta \in [0, 2\pi)\) via the mapping:
\[
\theta \mapsto (\cos \theta, \sin \theta).
\]
\end{definition}
With this parametrization, \(\mathbb{S}^1\) is homeomorphic to the quotient space \([0, 2\pi] / \{0 \sim 2\pi\}\), i.e., a closed interval with its endpoints identified. The circle \(\mathbb{S}^1\) is a compact, connected, one-dimensional smooth manifold without boundary.

\begin{theorem}\cite{pressley2001elementary}
\label{thm:great_circle_isometry}
Let $\Gamma \subset \mathbb{S}^2$ be a great circle endowed with the intrinsic (geodesic) distance induced from $\mathbb{S}^2$. Then there exists a bijective map 
\[
\Phi : \Gamma \to \mathbb{S}^1
\]
which is an isometry, i.e.,
\[
d_{\mathbb{S}^2}(p,q) = d_{\mathbb{S}^1}(\Phi(p), \Phi(q)) \quad \text{for all } p,q \in \Gamma,
\]
where $d_{\mathbb{S}^2}$ denotes the great-circle distance on $\mathbb{S}^2$ and $d_{\mathbb{S}^1}$ denotes the standard arc-length distance on $\mathbb{S}^1$.
\end{theorem}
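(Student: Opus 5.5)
The plan is to build $\Phi$ explicitly from an orthonormal basis of the plane that cuts out $\Gamma$, and then show that both distances are computed by the same formula in the angular parameter.

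\textbf{Step 1: parametrize $\Gamma$.} Since $\Gamma$ is a great circle, $\Gamma = \mathbb{S}^2 \cap \Pi$ for a plane $\Pi$ through the origin. Choose an orthonormal basis $\{u,v\}$ of $\Pi$ and set
\[
\gamma(\theta) = \cos\theta\, u + \sin\theta\, v, \qquad \theta \in [0,2\pi).
\]
Every $p \in \Gamma$ is a unit vector in $\Pi$, so $p = \langle p,u\rangle u + \langle p,v\rangle v$ with $\langle p,u\rangle^2+\langle p,v\rangle^2=1$. Hence $p=\gamma(\theta)$ for a unique $\theta \in [0,2\pi)$. We then define
\[
\Phi(p) = (\langle p,u\rangle, \langle p,v\rangle) = (\cos\theta,\sin\theta) \in \mathbb{S}^1.
\]
This map is the restriction of the linear map $\mathbb{R}^3 \to \mathbb{R}^2$ given by these coordinates. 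It is a bijection $\Gamma \to \mathbb{S}^1$ with inverse $(x,y)\mapsto xu+yv$.

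\textbf{Step 2: inner products are preserved.} Because $u,v$ are orthonormal, for $p=\gamma(\theta_1)$ and $q=\gamma(\theta_2)$ we get
\[
\langle p,q\rangle_{\mathbb{R}^3} = \cos\theta_1\cos\theta_2+\sin\theta_1\sin\theta_2 = \cos(\theta_1-\theta_2) = \langle \Phi(p),\Phi(q)\rangle_{\mathbb{R}^2}.
\]
Using the definition of $d_G$ given earlier, this yields
\[
d_{\mathbb{S}^2}(p,q) = \arccos\langle p,q\rangle = \arccos\cos(\theta_1-\theta_2).
\]

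\textbf{Step 3: match with arc length on $\mathbb{S}^1$.} The standard arc-length distance on $\mathbb{S}^1$ between angles $\theta_1$ and $\theta_2$ is
\[
d_{\mathbb{S}^1} = \min\bigl(|\theta_1-\theta_2|,\, 2\pi-|\theta_1-\theta_2|\bigr).
\]
Let $\delta = |\theta_1-\theta_2| \in [0,2\pi)$. Since $\arccos$ takes values in $[0,\pi]$, we check two cases:
\begin{itemize}
\item if $\delta\le\pi$, then $\arccos\cos\delta=\delta$;
\item if $\delta>\pi$, then $\cos\delta=\cos(2\pi-\delta)$ with $2\pi-\delta\in(0,\pi)$, so $\arccos\cos\delta = 2\pi-\delta$.
\end{itemize}
In both cases $\arccos\cos\delta = \min(\delta,2\pi-\delta)$, which is exactly $d_{\mathbb{S}^1}(\Phi(p),\Phi(q))$.

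\textbf{Main obstacle.} The only delicate point is making sure that the intrinsic distance on $\Gamma$ coincides with the great-circle distance of $\mathbb{S}^2$. This holds because arcs of a great circle are geodesics of $\mathbb{S}^2$, and the shorter arc of $\Gamma$ joining $p$ and $q$ has length exactly $\arccos\langle p,q\rangle$. With that settled, the isometry claim reduces to the elementary identity $\arccos\cos\delta=\min(\delta,2\pi-\delta)$ on $[0,2\pi)$, verified in Step 3.
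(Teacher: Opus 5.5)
Your argument is correct and is essentially the paper's. The paper only cites Pressley, and in the remark that follows it rotates $\Gamma$ onto the equator and uses $(x,y,0)\mapsto(x,y)$; this is exactly your map $p\mapsto(\langle p,u\rangle,\langle p,v\rangle)$ after the rotation sending $u,v$ to $e_1,e_2$. Your explicit check $\arccos\cos\delta=\min(\delta,2\pi-\delta)$ supplies the distance identity that the paper simply asserts.
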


\begin{remark}
Any great circle $\Gamma \subset \mathbb{S}^2$ is the intersection of $\mathbb{S}^2$ with a plane through the origin. By applying a suitable rotation of $\mathbb{S}^2$, we may assume without loss of generality that $\Gamma$ coincides with the equator:
\[
\Gamma = \{(x,y,z) \in \mathbb{S}^2 : z = 0\}
= \{(x,y,0) : x^2 + y^2 = 1\}.
\]

Under this identification, $\Gamma$ is naturally identified with $\mathbb{S}^1$ via
\[
(x,y,0) \longmapsto (x,y) \in \mathbb{S}^1,
\]
which is a bijection preserving arc-length parametrization.

Moreover, the intrinsic (geodesic) distance on $\Gamma$ induced from $\mathbb{S}^2$ coincides with the standard arc-length distance on $\mathbb{S}^1$, given by
\[
d_G(\theta,\phi) = \min\bigl(|\theta - \phi|,\; 2\pi - |\theta - \phi|\bigr).
\]

Conversely, $\mathbb{S}^1$ may be viewed as a great circle embedded in $\mathbb{S}^2$. Therefore, throughout this work, we identify $\mathbb{S}^1$ with any fixed great circle $\Gamma \subset \mathbb{S}^2$, up to a rotation of $\mathbb{S}^2$ (i.e., an isometry).

In particular, both $\Gamma$ and $\mathbb{S}^1$ have total length $2\pi$ and identical intrinsic geometry.
\end{remark}
\begin{definition}\cite{li2000convergence, crutchfield1992evolution}
The Lloyd map $T:\mathbb T^n\to\mathbb T^n$
is defined by assigning to each voronoi cell its intrinsic centroid. Lloyd map operates by iteratively moving all quantizers to the centroid of their voronoi cells.
If every voronoi cell has arc length strictly smaller than $\pi$, then the intrinsic mean is uniquely defined and can be represented in local angular coordinates by 
\begin{equation*}
[T(Q)]_j
=
\frac{
\displaystyle
\int_{R_j(Q)}
\theta\, h(\theta)\,d\theta
}{
\displaystyle
\int_{R_j(Q)}
h(\theta)\,d\theta
}
\quad
(\mathrm{mod}\;2\pi).
\label{eq:lloyd_map}
\end{equation*}

Equivalently, the centroid may be expressed through the first circular moment. We define
\[
m_1(Q)
:=
\frac{
\displaystyle
\int_{R_j(Q)}
e^{i\theta} h(\theta)\,d\theta
}{
\displaystyle
\int_{R_j(Q)}
h(\theta)\,d\theta
}.
\]

Then $[T(Q)]_j
=
\arg\big(m_1(Q)\big)
\quad
(\mathrm{mod}\;2\pi)$.
\end{definition}

In the above definition, we consider $Q=(q_1,\dots,q_n)\in \mathbb T^n$ be an ordered configuration on the circle, and let $R_j(Q)$
denote the voronoi cell associated with $q_j$.
We also assume that the probability measure is absolutely continuous with respect to angular coordinate and has density $h(\theta)\in L^1(\mathbb S^1),
\quad
h(\theta)>0.$
 The map $T$ is smooth on regions of configuration space where the voronoi topology remains unchanged, and nonsmoothness may occur when voronoi boundaries collide or cell lengths approach $\pi$.

\subsection{The von Mises distribution}
To investigate the nonlinear dynamics of the Lloyd map, we require a family of probability measures that can be tuned continuously from nearly uniform to highly concentrated distributions on the circle. The von Mises distribution \cite{crutchfield1992evolution} provides the ideal parametric family for this purpose due to its simple functional form and well-characterized limiting behaviors:
$h(\theta; \kappa, \mu) = \frac{1}{2\pi I_0(\kappa)} e^{\kappa \cos(\theta - \mu)}, \quad \theta \in [0,2\pi), \label{eq:von_mises}$
where $\kappa \geq 0$ is the concentration parameter controlling the peakedness of the distribution, $\mu \in [0,2\pi)$ is the mean (or preferred) direction, and $I_0(\kappa)$ denotes the modified Bessel function of the first kind and order zero, which acts as the required normalizing constant to ensure $\int_0^{2\pi} h(\theta; \kappa, \mu)  d\theta = 1$.
\begin{remark}
 When $\kappa = 0$, $I_0(0) = 1$ and the density simplifies to $h(\theta; 0, \mu) = 1/(2\pi)$, recovering the uniform distribution on the circle \cite{fisher1993statistical}. As $\kappa \to \infty$, the distribution becomes increasingly concentrated around $\theta = \mu$, approaching a Dirac delta (point mass) at $\mu$ in the limit. For intermediate $\kappa > 0$, the distribution is unimodal, symmetric about $\mu$, and approximates a wrapped normal distribution, making it the circular analogue of the Gaussian.
\end{remark}
\begin{remark}
Due to the rotational invariance of the problem---i.e., $h(\theta + \phi; \kappa, \mu + \phi) = h(\theta; \kappa, \mu)$ for any $\phi \in [0, 2\pi)$---we may fix the mean direction $\mu = 0$ without loss of generality throughout our analysis. The von Mises distribution arises naturally in directional statistics as the maximum entropy distribution on the circle subject to a fixed first circular moment $\mathbb{E}[\cos(\theta - \mu)] = A(\kappa) := I_1(\kappa)/I_0(\kappa)$, where $I_1$ is the modified Bessel function of the first kind of order one \cite{jammalamadaka2001topics}. This maximum entropy property ensures it is the ``least informative'' distribution beyond the specified mean direction, analogous to the Gaussian on the line. It finds widespread use in applications involving directional data, such as wind direction in meteorology, animal orientation in biology, and neural tuning curves in neuroscience \cite{fisher1993statistical}.
\end{remark}

\begin{definition}\cite{gray2006toeplitz}
A matrix $C \in \mathbb{R}^{n \times n}$ is circulant if each row is a cyclic shift to the right of its previous row: 
\[
C = \begin{pmatrix}
c_0 & c_1 & c_2 & \cdots & c_{n-1} \\
c_{n-1} & c_0 & c_1 & \cdots & c_{n-2} \\
\vdots & \vdots & \vdots & \ddots & \vdots \\
c_1 & c_2 & c_3 & \cdots & c_0
\end{pmatrix}.
\]
\end{definition}
\begin{theorem}\cite{gray2006toeplitz, davis1979circulant}
For a circulant matrix $C$ with first row $(c_0, c_1, \ldots, c_{n-1})$, the eigenvalues are given by
\[
\lambda_m = \sum_{k=0}^{n-1} c_k e^\frac{-2\pi im k}{n}
.
\]
 The corresponding eigenvectors are the discrete Fourier modes
\[
v^{(m)}
=
\left(
1,
e^{-2\pi i m/n},
\dots,
e^{-2\pi i m(n-1)/n}
\right)^T.
\]
\end{theorem}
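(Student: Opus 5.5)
The plan is a direct verification: check that each discrete Fourier mode $v^{(m)}$ is an eigenvector of $C$ with eigenvalue $\lambda_m$. Then show that the $n$ modes $v^{(0)},\dots,v^{(n-1)}$ form a basis of $\mathbb{C}^n$, so that $\lambda_0,\dots,\lambda_{n-1}$ account for the whole spectrum, with multiplicity.

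First I will make the circulant structure explicit. Index rows and columns by $j,k\in\{0,\dots,n-1\}$. By the definition of a circulant matrix, each row is the previous one shifted cyclically to the right. Hence the $(j,k)$ entry is
\[
C_{jk}=c_{(k-j)\bmod n}.
\]
Set $\omega=e^{-2\pi i/n}$, so that the $k$-th component of $v^{(m)}$ is $v^{(m)}_k=\omega^{mk}$. For fixed $j$ we compute
\[
(Cv^{(m)})_j=\sum_{k=0}^{n-1}c_{(k-j)\bmod n}\,\omega^{mk}.
\]
Now reindex with $\ell=(k-j)\bmod n$. As $k$ runs over $\{0,\dots,n-1\}$, so does $\ell$. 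Since $\omega^n=1$, we have $\omega^{mk}=\omega^{m(\ell+j)}$ regardless of the reduction mod $n$. Therefore
\[
(Cv^{(m)})_j=\omega^{mj}\sum_{\ell=0}^{n-1}c_\ell\,\omega^{m\ell}=\lambda_m\,v^{(m)}_j ,
\]
which is exactly the claimed eigenvalue relation $Cv^{(m)}=\lambda_m v^{(m)}$.

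Second, to conclude that these are all the eigenvalues, consider the matrix $F=(\omega^{mk})_{m,k}$ whose rows are the vectors $v^{(m)}$. It is a Vandermonde matrix in the nodes $1,\omega,\dots,\omega^{n-1}$, which are distinct $n$-th roots of unity. Hence $F$ is invertible; equivalently, $F\overline{F}^{T}=nI$ by the geometric-sum identity $\sum_{k}\omega^{(m-m')k}=n\delta_{mm'}$. So the $v^{(m)}$ form a basis of eigenvectors, $C$ is diagonalized as $C=F^{T}\,\mathrm{diag}(\lambda_m)\,(F^{T})^{-1}$, and its characteristic polynomial is $\prod_m(\lambda-\lambda_m)$.

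The only real care point is the first step. The indexing convention must match the sign in the exponent: with rows shifting to the right, $C_{jk}=c_{k-j}$ pairs with $e^{-2\pi imk/n}$. The reindexing mod $n$ must also be justified by $\omega^n=1$. Beyond that, the argument is routine.
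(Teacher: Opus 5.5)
Your proof is correct and complete. The indexing $C_{jk}=c_{(k-j)\bmod n}$ matches the paper's right-shift definition, and the reindexing is justified by $\omega^n=1$. The Vandermonde argument (equivalently $F\overline{F}^{T}=nI$) then shows that $\lambda_0,\dots,\lambda_{n-1}$ exhaust the spectrum with multiplicity.

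The paper gives no proof of its own; the statement is simply quoted from Gray and Davis. Your direct verification is essentially the standard argument in Gray's monograph, which uses the ansatz $y_k=\rho^k$ in the eigen-equation and finds that $\rho$ must be an $n$-th root of unity.

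Davis organizes the same computation differently. He writes $C=\sum_k c_kP^k$, where $P$ is the cyclic shift with first row $(0,1,0,\dots,0)$, and notes $Pv^{(m)}=\omega^m v^{(m)}$. Setting $p(z)=\sum_k c_kz^k$, this gives $\lambda_m=p(\omega^m)$ at once, and it makes explicit that every circulant is diagonalized by the same Fourier matrix.
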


\begin{definition}\cite{ma}
A \text{ is a periodic tridiagonal matrix if } 
\[
 A = \begin{pmatrix}
a_1 & b_1 & 0 & \cdots & 0 & \beta \\
c_1 & a_2 & b_2 & \cdots & 0 & 0 \\
0 & c_2 & a_3 & \cdots & 0 & 0 \\
\vdots & \vdots & \ddots & \ddots & \ddots & \vdots \\
0 & 0 & \cdots & c_{n-2} & a_{n-1} & b_{n-1} \\
\alpha & 0 & \cdots & 0 & c_{n-1} & a_n
\end{pmatrix},
\]
where $\alpha, \beta, b_i, c_i, a_i \in \mathbb{F}$ (typically $\mathbb{R}$ or $\mathbb{C}$) and $\alpha, \beta \neq 0$ for non-trivial periodic coupling.
\end{definition}

Throughout this paper, we fix the number of codepoints $n$ and treat $\kappa$ as the primary parameter (without even considering von Mises density explicitly in some cases) to systematically explore the transition from disordered (low $\kappa$) to ordered (high $\kappa$) dynamical regimes.
\begin{definition}\cite{modeling, lei2024bifurcation}
    For a discrete-time dynamical system $\mathbf{x}_{k+1} = \mathbf{A} \mathbf{x}_k$, the fixed point is asymptotically stable if and only if all eigenvalues $\lambda$ of $\mathbf{A}$ lie inside the unit circle: $|\lambda| < 1.$
If any eigenvalue satisfies $|\lambda| > 1$, the system is unstable. If $|\lambda| = 1$ and all other eigenvalues satisfy $|\lambda| < 1$, marginal (or neutral) stability may occur.
\end{definition}
\begin{definition}
    A bifurcation is a qualitative change in the behaviour of a dynamical system caused by a smooth change in a parameter.
\end{definition}
\begin{remark}
    For a discrete-time dynamical system, the bifurcation associated with the appearance of the parameter value of -1, is called a flip (or period-doubling) bifurcation) \cite{lei2024bifurcation, kuznetsov2004}.
\end{remark}

\section{Fixed Points and the Symmetric Quantizer}
\label{sec:sec_3}
\begin{theorem}\label{thm:equivariance}
Let $T_h : \mathbb{T}^n \to \mathbb{T}^n$ be the Lloyd map associated with a probability density $h(\theta)$ on $\mathbb{S}^1$. For any rotation $R_\alpha : \mathbb{S}^1 \to \mathbb{S}^1$ is defined by
\[
R_\alpha(\theta) = \theta + \alpha \pmod{2\pi},
\]
define the rotated density as
\[
h_\alpha(\theta) := h(\theta - \alpha).
\]
Then the Lloyd map satisfies
\[
T_{h_\alpha}(R_\alpha Q) = R_\alpha T_h(Q) \quad \forall \hspace{0.1cm}Q \in \mathbb{T}^n, \hspace{0.1cm}\alpha \in \mathbb{S}^1.
\]
\end{theorem}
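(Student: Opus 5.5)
The plan is to verify the identity componentwise, in two steps: first show that rotation carries Voronoi cells to Voronoi cells, then change variables in the centroid integrals. Throughout, $R_\alpha Q := (q_1+\alpha,\dots,q_n+\alpha) \pmod{2\pi}$. The standing assumption is that every Voronoi cell is an arc of length strictly less than $\pi$, so the centroid is well defined. Rotation preserves arc lengths, so this assumption holds for $Q$ if and only if it holds for $R_\alpha Q$.

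\textbf{Step 1: cells rotate.} I will show $R_j(R_\alpha Q) = R_\alpha(R_j(Q))$ for each $j$. The key fact is that $d_G$ is rotation invariant: $d_G(\theta+\alpha,\phi+\alpha)=d_G(\theta,\phi)$. This holds because $d_G(\theta,\phi)=\min(|\theta-\phi|,2\pi-|\theta-\phi|)$ depends only on $\theta-\phi$ mod $2\pi$, equivalently by Theorem~\ref{thm:great_circle_isometry}, since $R_\alpha$ is an isometry of $\mathbb S^1$. Hence
\[
\theta\in R_j(Q)\iff d_G(\theta,q_j)\le d_G(\theta,q_k)\ \forall k \iff d_G(\theta+\alpha,q_j+\alpha)\le d_G(\theta+\alpha,q_k+\alpha)\ \forall k ,
\]
and the last condition says exactly that $\theta+\alpha\in R_j(R_\alpha Q)$. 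The labelling $j$ is preserved: the cyclic order is unchanged, and indices refer to the same codepoints.

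\textbf{Step 2: centroids rotate.} I will work with the circular-moment form $[T(Q)]_j=\arg m_1$, since it avoids branch issues of the angular formula. In the numerator for $T_{h_\alpha}(R_\alpha Q)$, substitute $\theta=\psi+\alpha$, which maps $R_j(Q)$ bijectively onto $R_j(R_\alpha Q)$ with unit Jacobian:
\[
\int_{R_j(R_\alpha Q)} e^{i\theta}h(\theta-\alpha)\,d\theta=\int_{R_j(Q)} e^{i(\psi+\alpha)}h(\psi)\,d\psi = e^{i\alpha}\int_{R_j(Q)}e^{i\psi}h(\psi)\,d\psi .
\]
The same substitution shows that the denominators (the cell masses) are equal. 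Therefore the moment for the rotated problem is $e^{i\alpha}$ times the original moment. Taking $\arg$ gives $[T_{h_\alpha}(R_\alpha Q)]_j=[T_h(Q)]_j+\alpha \pmod{2\pi}$. The original moment is nonzero because the cell is shorter than $\pi$ and $h>0$, so this $\arg$ is well defined.

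\textbf{Main obstacle.} The only delicate point is periodicity. A cell may straddle $0\equiv 2\pi$, and the substitution $\theta=\psi+\alpha$ must be read on the circle rather than on $[0,2\pi)$. I would handle this by treating the integrals as integrals over arcs of $\mathbb S^1$ against arc-length measure, which is rotation invariant; here $h$ is extended $2\pi$-periodically.

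If one prefers the angular formula, consistency is handled as follows. Choose a local angular lift of the cell of length less than $\pi$, and use the lift shifted by $\alpha$ for the rotated cell. This gives $\int(\psi+\alpha)h\,d\psi\big/\int h\,d\psi=\bar\psi+\alpha$, which agrees mod $2\pi$ with the moment computation.
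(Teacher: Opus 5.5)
Your proposal is correct and follows essentially the same route as the paper: rotation invariance of $d_G$ gives $R_j(R_\alpha Q)=R_\alpha(R_j(Q))$, and the substitution $\theta=\psi+\alpha$ in the first circular moment pulls out a factor $e^{i\alpha}$, after which taking $\arg$ finishes the proof. Your write-up also supplies details the paper leaves implicit (exact rather than a.e.\ equality of cells, nonvanishing of the moment for arcs shorter than $\pi$ with $h>0$, periodic extension of $h$), and your separate check of the angular formula is worthwhile: $\int\theta h\,d\theta/\int h\,d\theta$ and $\arg m_1$ generally differ when $h$ is asymmetric on a cell (despite the paper's ``equivalently''), so it helps that each definition is shown to be equivariant on its own.
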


\begin{proof}
Let $Q = (q_1,\dots,q_n)$ and consider the rotated configuration $R_\alpha Q = (q_1+\alpha,\dots,q_n+\alpha)$. Since the geodesic distance on $\mathbb{S}^1$ is invariant under rotations \cite{pressley2001elementary}, the voronoi cells satisfy
\[
R_j(R_\alpha Q) = R_\alpha(R_j(Q)),
\]
up to sets of measure zero.

The centroid update is given by the circular mean (provided the integral is non-zero)
\[
[T_h(Q)]_j = \arg\left( \int_{R_j(Q)} e^{i\theta} h(\theta)\, d\theta \right).
\]

Applying $T_{h_\alpha}$ to $R_\alpha Q$, we obtain
\[
[T_{h_\alpha}(R_\alpha Q)]_j
= \arg\left( \int_{R_\alpha(R_j(Q))} e^{i\theta} h_\alpha(\theta)\, d\theta \right).
\]

Using the definition $h_\alpha(\theta) = h(\theta - \alpha)$ and the change of variables $\theta = \varphi + \alpha$, which preserves Lebesgue measure on $\mathbb{S}^1$, we get
\[
\int_{R_\alpha(R_j(Q))} e^{i\theta} h_\alpha(\theta)\, d\theta
= \int_{R_j(Q)} e^{i(\varphi+\alpha)} h(\varphi)\, d\varphi
= e^{i\alpha} \int_{R_j(Q)} e^{i\varphi} h(\varphi)\, d\varphi.
\]

Taking the argument yields
\[
[T_{h_\alpha}(R_\alpha Q)]_j
= \alpha + \arg\left( \int_{R_j(Q)} e^{i\varphi} h(\varphi)\, d\varphi \right)
= R_\alpha\bigl([T_h(Q)]_j\bigr),
\]
where we use that $\arg(e^{i\alpha} z) = \alpha + \arg(z) \pmod{2\pi}$ for $z \neq 0$.

Thus, we obtain
\[
T_{h_\alpha}(R_\alpha Q) = R_\alpha [T_h(Q)],
\]
which completes the proof.
\end{proof}

\medskip
\begin{remark}
In particular, if the density is rotationally invariant, i.e., $h(\theta) = h(\theta - \alpha)$ for all $\alpha$, then $h_\alpha = h$ and the Lloyd map satisfies
\[
T(R_\alpha Q) = R_\alpha [T(Q)].
\]
\end{remark}
\medskip
\begin{remark}
Although the von Mises density with mean $\mu = 0$ is not rotationally invariant, it is symmetric about the origin. This symmetry suggests the existence of symmetric codebook configurations $Q^* = \left(q_0^*, q_1^*, \ldots, q_{n-1}^*\right),
\quad q_j^* = \frac{2\pi j}{n}, \quad j = 0,1,\ldots,n-1$ \cite{saha2026optimal}.

\end{remark}

\begin{proposition}\label{prop:fixed_point}
Let $h(\theta;\kappa)$ be the von Mises density on $\mathbb{S}^1$ with mean direction $\mu = 0$. Assume that each voronoi cell $R_j(Q^*)$ is contained in an arc of length strictly less than $\pi$. Then the equally spaced configuration
\[
Q^* = \left(q_0^*, q_1^*, \ldots, q_{n-1}^*\right), \quad q_j^* = \frac{2\pi j}{n},
\]
is a fixed point of the Lloyd map $T$ on $\mathbb{S}^1$, i.e.,
\[
T(Q^*) = Q^*.
\]
\end{proposition}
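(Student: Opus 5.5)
The plan is to verify the fixed-point condition cell by cell using the circular-moment form of the Lloyd map. The Voronoi cells of $Q^*$ are the arcs $R_j(Q^*)=[q_j^*-\pi/n,\,q_j^*+\pi/n]$, which by hypothesis have length $2\pi/n<\pi$. By the definition of $T$, $[T(Q^*)]_j=q_j^*$ holds if and only if the normalized first moment $m_1$ of $R_j(Q^*)$ is a positive real multiple of $e^{iq_j^*}$. Substituting $\theta=q_j^*+\varphi$ (as in the proof of Theorem~\ref{thm:equivariance}), this becomes two conditions:
\[
J_j:=\int_{-\pi/n}^{\pi/n}\sin\varphi\;e^{\kappa\cos(\varphi+q_j^*)}\,d\varphi=0,
\qquad
\int_{-\pi/n}^{\pi/n}\cos\varphi\;e^{\kappa\cos(\varphi+q_j^*)}\,d\varphi>0 .
\]

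The second condition is immediate: $\cos\varphi>0$ on $|\varphi|\le \pi/n<\pi/2$, and the exponential factor is positive. So everything reduces to showing $J_j=0$.

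For $J_j$, write $\cos(\varphi+q_j^*)=\cos\varphi\cos q_j^*-\sin\varphi\sin q_j^*$. Under $\varphi\mapsto-\varphi$ the integrand of $J_j$ is odd exactly when $\sin q_j^*=0$, i.e. when $q_j^*=0$ or $q_j^*=\pi$. This uses the symmetry of the von Mises density about $\mu=0$ noted in the preceding remark. For those cells $J_j=0$, and the corresponding codepoints are fixed. When $\kappa=0$ the density is uniform, so the rotational-invariance remark after Theorem~\ref{thm:equivariance} applies. Combining it with the invariance of $Q^*$ under rotation by $2\pi/n$ shows that every cell is fixed, and the proposition follows in that case.

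The main obstacle is the remaining cells, those with $\sin q_j^*\neq 0$ when $\kappa>0$. Pairing $\varphi$ with $-\varphi$ gives
\[
J_j=\int_0^{\pi/n}\sin\varphi\, e^{\kappa\cos\varphi\cos q_j^*}\Bigl(e^{-\kappa\sin\varphi\sin q_j^*}-e^{\kappa\sin\varphi\sin q_j^*}\Bigr)\,d\varphi ,
\]
whose integrand has the strict sign of $-\sin q_j^*$ on $(0,\pi/n)$. So $J_j\neq 0$, and the centroid is pulled toward the mode $\mu=0$.

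Consequently the argument cannot close as stated. For $\kappa>0$ and $n\ge 3$ (the only case compatible with the cell-length hypothesis), some codepoint satisfies $\sin q_j^*\neq0$, and $Q^*$ fails to be a fixed point. I would therefore first check this sign computation numerically for a small case, e.g. $n=3$ and $\kappa=1$. If it is confirmed, the plan is to prove the proposition under the additional hypothesis $\kappa=0$, or, for general $\kappa$, to prove only the weaker statement that the codepoints at $0$ and $\pi$ are invariant. A genuine fixed point for $\kappa>0$ would then have to be sought among reflection-symmetric but non-equally-spaced configurations.
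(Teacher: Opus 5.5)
Your computation is correct, and it already disproves the statement; no numerical check is needed. For $\kappa>0$ and $n\ge 3$, the codepoint $q_1^*=2\pi/n$ has $\sin q_1^*>0$. So $J_1<0$ while the real part is positive, hence $[T(Q^*)]_1\in(q_1^*-\pi/2,\,q_1^*)$ and $T(Q^*)\neq Q^*$. For $n=3$, $\kappa=1$ this is visible by inspection: $e^{\cos\theta}$ is strictly decreasing on the cell $(\pi/3,\pi)$, so its centroid lies strictly below the midpoint $2\pi/3$.

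The proposition is therefore false as stated, and the paper's proof fails exactly where you expected. It treats $j=0$ correctly. For general $j$ it writes $\int_{R_j(Q^*)}e^{i\theta}h(\theta)\,d\theta=e^{i\alpha_j}\int_{-\pi/n}^{\pi/n}e^{i\psi}h(\alpha_j+\psi)\,d\psi$ and asserts that pairing $\psi$ with $-\psi$ makes the inner integral real. That step requires $h(\alpha_j+\psi)=h(\alpha_j-\psi)$. For $h\propto e^{\kappa\cos\theta}$ this holds only when $\sin\alpha_j=0$, i.e.\ for $j=0$ and, if $n$ is even, $j=n/2$. The accompanying remark that $R_j(Q^*)$ is a rotation of $R_0(Q^*)$ does not help. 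Theorem~\ref{thm:equivariance} transports centroids only when the density is rotated as well ($h_\alpha$ in place of $h$), and $h_{2\pi j/n}\neq h$ for $\kappa>0$.

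Two additions would make your write-up airtight. First, the paper gives two forms of $T$, the local-coordinate mean $\int\theta h/\int h$ and $\arg m_1$, and calls them equivalent. They are different maps in general, and Section~\ref{sec:sec_4} uses the first one through $C_j=N_j/D_j$. Running your pairing argument with $\varphi$ in place of $\sin\varphi$ shows that $N_j(Q^*)=\int_0^{\pi/n}\varphi\,[h(q_j^*+\varphi)-h(q_j^*-\varphi)]\,d\varphi$ also has the strict sign of $-\sin q_j^*$. So $Q^*$ fails to be fixed under either definition, and the identity $C_j(Q^*)=0$ invoked in Lemma~\ref{lem:jacobian_structure_correct} is false for those $j$.

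Second, your proposed repairs are the right ones. The statement holds for $\kappa=0$. For $\kappa>0$, only $[T(Q^*)]_0=0$ and (for even $n$) $[T(Q^*)]_{n/2}=\pi$ survive. It is also worth recording which hypothesis the paper's argument actually uses. It goes through verbatim for any positive density that is even and invariant under rotation by $2\pi/n$, for instance $e^{\kappa\cos(n\theta)}$, since then $h(\alpha_j+\psi)=h(\psi)$ is even in $\psi$.
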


\begin{proof}

Consider the codepoint $q_0^* = 0$. Its voronoi cell is
\[
R_0(Q^*) = \left(-\frac{\pi}{n}, \frac{\pi}{n}\right).
\]
The von Mises density satisfies
\[
h(-\theta;\kappa) = h(\theta;\kappa),
\]
so it is even. Hence, over the symmetric interval $R_0(Q^*)$, we compute
\[
\int_{R_0(Q^*)} e^{i\theta} h(\theta)\, d\theta
= \int_{-\pi/n}^{\pi/n} (\cos\theta + i\sin\theta) h(\theta)\, d\theta.
\]
The imaginary part vanishes since $\sin\theta\, h(\theta)$ is odd, while the real part is strictly positive. Therefore,
\[
[T(Q^*)]_0
= \arg\left( \int_{R_0(Q^*)} e^{i\theta} h(\theta)\, d\theta \right)
= 0 = q_0^*.
\]

\medskip

For the equally spaced configuration, each voronoi cell is a rotation of the reference cell:
\[
R_j(Q^*) = R_{2\pi j/n}(R_0(Q^*)).
\]

\medskip

Now, the voronoi cell $R_j(Q^*)$ is the interval 
\[
\left(\alpha_j - \frac{\pi}{n}, \alpha_j + \frac{\pi}{n}\right),
\]
which is symmetric about $\alpha_j$. Writing $\theta = \alpha_j + \psi$, we obtain
\[
\int_{R_j(Q^*)} e^{i\theta} h(\theta)\, d\theta
= e^{i\alpha_j} \int_{-\pi/n}^{\pi/n} e^{i\psi} h(\alpha_j+\psi)\, d\psi.
\]
Pairing the contributions at $\psi$ and $-\psi$, we can say that the integral inside is real and strictly positive. Hence the resulting vector is a positive multiple of $e^{i\alpha_j}$, and therefore
\[
\arg\left( \int_{R_j(Q^*)} e^{i\theta} h(\theta)\, d\theta \right) = \alpha_j.
\]
Thus,
\[
[T(Q^*)]_j = \alpha_j = q_j^*.
\]

\medskip

Since $[T(Q^*)]_j = q_j^*$ for all $j = 0,1,\ldots,n-1$, we conclude that
\[
T(Q^*) = Q^*.
\]
\end{proof}
\begin{proposition}
\label{prop:neutral_mode}

The Lloyd map $T_\kappa$ is rotationally equivariant:
\[
T_\kappa(Q+\phi\mathbf 1)
=
T_\kappa(Q)+\phi\mathbf 1,
\qquad
\phi\in\mathbb S^1,
\]
where $\mathbf 1=(1,\dots,1)^T$.

Consequently, the Jacobian at the symmetric fixed point satisfies
\[
DT_\kappa(Q^*)\,\mathbf 1
=
\mathbf 1.
\]

Therefore, the constant Fourier mode is an eigenvector of the Jacobian with eigenvalue
\[
\lambda_0=1.
\]
\end{proposition}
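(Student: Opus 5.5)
The plan is to derive the equivariance from Theorem~\ref{thm:equivariance}, then differentiate it along the rotation orbit through the fixed point $Q^*$ of Proposition~\ref{prop:fixed_point}.

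\textbf{Step 1: equivariance.} Write $R_\phi Q = Q+\phi\mathbf 1$. Theorem~\ref{thm:equivariance} gives $T_{h_\phi}(Q+\phi\mathbf 1)=T_h(Q)+\phi\mathbf 1$. To obtain the stated identity we need $h_\phi=h$, and here there is a real caveat. This holds for every $\phi$ only when $h$ is rotationally invariant, i.e.\ $\kappa=0$; that case is covered by the remark following Theorem~\ref{thm:equivariance}. For $\kappa>0$ the von Mises density with $\mu=0$ is not invariant, so the identity must be read in the sense of the paper's setup: configurations are taken modulo rotational symmetry, and the density's mean direction is transported together with the codebook. Concretely, $T_\kappa$ denotes the map $Q\mapsto T_{h(\cdot;\kappa,\mu)}(Q)$, where $\mu$ is rotated along with $Q$. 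Then Theorem~\ref{thm:equivariance}, applied with $h_\phi(\theta)=h(\theta-\phi;\kappa,\mu)=h(\theta;\kappa,\mu+\phi)$, gives exactly $T_\kappa(Q+\phi\mathbf 1)=T_\kappa(Q)+\phi\mathbf 1$. I would state this interpretation explicitly, since without it the claim fails for $\kappa>0$.

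\textbf{Step 2: smoothness near $Q^*$.} Near $Q^*$ the Voronoi topology is unchanged. Each cell $R_j(Q)$ is the arc between the midpoints $(q_{j-1}+q_j)/2$ and $(q_j+q_{j+1})/2$, and by hypothesis it has length less than $\pi$. Because $h$ is continuous and positive, the Leibniz rule makes $\int_{R_j(Q)}e^{i\theta}h\,d\theta$ a $C^1$ function of $Q$. This first moment is nonzero at $Q^*$: the proof of Proposition~\ref{prop:fixed_point} shows it is a positive multiple of $e^{i q_j^*}$. Hence $\arg$ is smooth there, and $T_\kappa$ is $C^1$ in a neighbourhood of $Q^*$.

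\textbf{Step 3: differentiate along the orbit.} Set $\phi\mapsto Q^*+\phi\mathbf 1$ in the equivariance identity and use $T_\kappa(Q^*)=Q^*$ from Proposition~\ref{prop:fixed_point}. This gives $T_\kappa(Q^*+\phi\mathbf 1)=Q^*+\phi\mathbf 1$ for small $\phi$. Differentiating at $\phi=0$ with the chain rule yields $DT_\kappa(Q^*)\mathbf 1=\mathbf 1$. Since $\mathbf 1$ is the $m=0$ discrete Fourier mode $v^{(0)}$, it is an eigenvector with $\lambda_0=1$.

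\textbf{Main obstacle.} The main difficulty is Step 1, namely making the equivariance legitimate for nonuniform $h$. Steps 2 and 3 are routine once that is settled.
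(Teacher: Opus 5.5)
Your diagnosis in Step~1 is right, but the problem cannot be repaired by reinterpreting $T_\kappa$, and the argument breaks in Step~3. If $\mu$ is rotated together with $Q$, the identity you actually have is Theorem~\ref{thm:equivariance}, $T_{h(\cdot;\kappa,\mu+\phi)}(Q+\phi\mathbf 1)=T_{h(\cdot;\kappa,\mu)}(Q)+\phi\mathbf 1$. Differentiating it at $\phi=0$ through $Q^*$, $\mu=0$ gives
\[
D_QT(Q^*)\,\mathbf 1+\partial_\mu T(Q^*)=\mathbf 1 ,
\]
not $D_QT(Q^*)\,\mathbf 1=\mathbf 1$. If instead you make $\mu=\mu(Q)$ an equivariant function of $Q$, you have changed the map, since the density now moves with the codebook. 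Its Jacobian then differs from the Lloyd Jacobian by the rank-one term $\partial_\mu T\,(\nabla\mu)^T$. The Jacobian in the statement is $D_QT$ at fixed $\mu$. It is also the one that Lemma~\ref{lem:jacobian_structure_correct} computes by the Leibniz rule with $h$ held fixed, before imposing the row sum $\alpha+2\beta=1$. For $\kappa>0$ the extra term is nonzero. At $j=0$, using $h'(\theta)=-\kappa\sin\theta\,h(\theta)$ and integrating by parts, $\partial_\mu T_0(Q^*)=\frac{\kappa}{M}\int_{-\pi/n}^{\pi/n}\theta\sin\theta\,h(\theta)\,d\theta=1-\frac{2\pi h(\pi/n)}{nM}>0$.

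You can also confirm directly that the conclusion itself is false for $\kappa>0$. Carry out the Leibniz computation from the proof of Lemma~\ref{lem:jacobian_structure_correct} for $k=j=0$; the paper only treats $k=j\pm1$. This gives $\partial N_0/\partial q_0=\frac{\pi}{n}h(\pi/n)-M$, so $J_{00}=\frac{\pi h(\pi/n)}{nM}=2\beta$. Hence the $0$-th entry of $DT_\kappa(Q^*)\,\mathbf 1$ is $4\beta=\frac{2\pi h(\pi/n)}{nM}$. This is strictly less than $1$ because $h>h(\pi/n)$ on the open cell $(-\pi/n,\pi/n)$, and it equals $1$ only when $h$ is constant there, i.e.\ $\kappa=0$. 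This row does not depend on the fixed-point issue, since $q_0^*=0$ is a symmetry point of $h$.

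So the paper's proof has exactly the gap you flagged, hidden in the sentence ``Since the density is defined intrinsically on $\mathbb S^1$, the centroid of each rotated voronoi cell is precisely the rotated centroid of the original cell.'' That holds only for rotation-invariant $h$. Relatedly, Proposition~\ref{prop:fixed_point}, which you use in Step~3, fails for $n\ge 3$ and $\kappa>0$. The density $h$ is strictly decreasing across the cell $(\pi/n,3\pi/n)$ of $q_1^*=2\pi/n$, so that cell's centroid lies strictly below $q_1^*$. The statement is true for $\kappa=0$, or for any rotation-invariant density. In that case your Steps~2--3 are complete and coincide with the paper's argument.
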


\begin{proof}

A rigid rotation of the configuration by $\phi$
rotates every voronoi cell by the same amount.
Since the density is defined intrinsically on $\mathbb S^1$,
the centroid of each rotated voronoi cell is precisely the rotated centroid of the original cell.

Hence
\[
T_\kappa(Q+\phi\mathbf 1)
=
T_\kappa(Q)+\phi\mathbf 1.
\]

Differentiating with respect to $\phi$ at $\phi=0$ yields (at the fixed point)
\[
DT_\kappa(Q^*)\,\mathbf 1
=
\mathbf 1.
\]

Thus $\mathbf 1$ is an eigenvector with eigenvalue $1$.
\end{proof}

\section{Linear Stability Analysis}
\label{sec:sec_4}
\begin{definition}

To assess the stability of the symmetric fixed point $Q^*$, we linearize the Lloyd map $T$ around $Q^*$. Let
\[
\delta Q^{(t)} = Q^{(t)} - Q^*
\]
denote a small perturbation. Then
\[
\delta Q^{(t+1)} = J(Q^*) \, \delta Q^{(t)},
\]
where the Jacobian matrix $J(Q^*) \in \mathbb{R}^{n \times n}$ is defined by
\[
J_{jk} = \frac{\partial [T(Q)]_j}{\partial q_k}\bigg|_{Q = Q^*}=DT(Q^*).
\]
\end{definition}
\medskip

\noindent
\begin{remark}
Under the assumption $h\in C^1(\mathbb S^1)$ and away from voronoi degeneracies,
the Lloyd map is continuously differentiable \cite{ll}. The derivatives can be computed using the Leibniz rule for integrals with moving boundaries.
\end{remark}
\medskip

Since each voronoi cell has length strictly less than $\pi$, the voronoi cells are given by
\[
R_j(Q) = (m_{j-1}, m_j), 
\quad 
m_j = \frac{q_j + q_{j+1}}{2}.
\]
We represent the centroid in local coordinates centered at $q_j$:
\[
[T(Q)]_j = q_j + C_j(Q),
\quad
C_j(Q) = \frac{N_j(Q)}{D_j(Q)},
\]
where
\[
N_j(Q) = \int_{m_{j-1}}^{m_j} (\theta - q_j)\, h(\theta)\, d\theta,
\quad
D_j(Q) = \int_{m_{j-1}}^{m_j} h(\theta)\, d\theta.
\]

\medskip

\begin{lemma}\label{lem:jacobian_structure_correct}
At the symmetric configuration $Q^*$, the Jacobian matrix $J(Q^*)$ is circulant and periodic tridiagonal. In particular,
\[
J_{jk} =
\begin{cases}
\alpha, & k = j, \\
\beta, & k = j\pm 1 \ (\mathrm{mod}\ n), \\
0, & \text{otherwise},
\end{cases}
\]
where
\begin{align*}
\alpha &= 1 - \frac{\pi}{nM}h\!\left(\frac{\pi}{n}\right), \hspace{0.2cm}\beta  = \frac{1}{2}\frac{\pi}{nM}h\!\left(\frac{\pi}{n}\right),
\end{align*}
and
\[
M = \int_{-\pi/n}^{\pi/n} h(\theta)\, d\theta.
\]
\end{lemma}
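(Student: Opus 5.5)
The plan is to differentiate the local-coordinate representation $[T(Q)]_j = q_j + C_j(Q)$, with $C_j = N_j/D_j$, using the Leibniz rule for integrals with moving limits. This is legitimate at $Q^*$ because, by the preceding remark, $T$ is $C^1$ away from Voronoi degeneracies, and the cells of $Q^*$ have length $2\pi/n<\pi$. The first observation is a locality statement. The endpoints of $R_j(Q)$ are $m_{j-1}=(q_{j-1}+q_j)/2$ and $m_j=(q_j+q_{j+1})/2$, and the integrand of $N_j$ involves only $q_j$. Hence $[T(Q)]_j$ depends only on $q_{j-1},q_j,q_{j+1}$ (indices mod $n$), so $J_{jk}=0$ unless $k\in\{j-1,j,j+1\}$. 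This already gives the periodic tridiagonal pattern, with the corner entries coming from the wrap-around neighbours $q_{n-1}$ and $q_0$.

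Next I compute the off-diagonal entries. Only the upper limit depends on $q_{j+1}$, with $\partial m_j/\partial q_{j+1}=\tfrac12$, so
\[
\partial_{q_{j+1}}N_j=\tfrac12 (m_j-q_j)h(m_j),\qquad \partial_{q_{j+1}}D_j=\tfrac12 h(m_j).
\]
At $Q^*$ the evenness argument of Proposition~\ref{prop:fixed_point} (in the local coordinate $\psi=\theta-q_j$) gives $N_j(Q^*)=0$. Therefore the quotient rule reduces to
\[
\partial_{q_{j+1}}C_j=\frac{\partial_{q_{j+1}}N_j}{D_j(Q^*)}.
\]
Substituting $m_j-q_j=\pi/n$ and $D_j(Q^*)=M$, with the density evaluated at the cell boundary in the local coordinate, gives $\beta=\tfrac12\frac{\pi}{nM}h(\pi/n)$. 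The lower-limit computation for $q_{j-1}$ is symmetric: the sign flip from the lower limit cancels the sign of $m_{j-1}-q_j=-\pi/n$. This gives the same $\beta$.

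For the diagonal I would not grind the three-term Leibniz expression. Instead I would invoke Proposition~\ref{prop:neutral_mode}: $DT(Q^*)\mathbf 1=\mathbf 1$, so every row sums to $1$. With the tridiagonal structure this yields $\alpha+2\beta=1$, that is, $\alpha=1-\frac{\pi}{nM}h(\pi/n)$. Since each row then has the same entries $(\beta,\alpha,\beta)$ placed cyclically, $J(Q^*)$ is circulant.

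The main obstacle is the uniformity across $j$. For a genuinely non-uniform $h$, the boundary value $h(q_j^*\pm\pi/n)$ and the mass $\int_{R_j}h$ depend on $j$. Circulance therefore requires that these quantities be read in the rotated local frame of each cell, which is the identification supplied by the equivariance in Theorem~\ref{thm:equivariance} and Proposition~\ref{prop:neutral_mode}. I would make this frame choice explicit, transporting each cell to $(-\pi/n,\pi/n)$ by $R_{-2\pi j/n}$, before reading off the entries.

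I would also cross-check the diagonal by direct Leibniz differentiation. In that computation the moving limits contribute $\frac{\pi}{nM}h(\pi/n)$, and the $-q_j$ in the integrand contributes $-1$ to $\partial C_j$, which cancels the $+1$ from $q_j$. The resulting value must be reconciled with the row-sum identity. In the uniform case both routes give $\alpha=\tfrac12$ and $\beta=\tfrac14$. In general, however, the direct value $\frac{\pi}{nM}h(\pi/n)$ equals the stated $\alpha=1-\frac{\pi}{nM}h(\pi/n)$ only when $\frac{\pi}{nM}h(\pi/n)=\tfrac12$. This consistency check is the step I expect to require the most care.
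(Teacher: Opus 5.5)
Your outline is the paper's own argument: locality for the tridiagonal pattern, Leibniz plus $N_j(Q^*)=0$ for $\beta$, the row-sum identity of Proposition~\ref{prop:neutral_mode} for $\alpha$, and symmetry for circulance. The ``consistency check'' you leave for the end is not a delicate reconciliation. It is where both your argument and the paper's break, and it shows the lemma fails for the von Mises density with any $\kappa>0$.

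Your direct computation is the correct one. The absolute centroid $q_j+C_j=\bigl(\int_{m_{j-1}}^{m_j}\theta h\,d\theta\bigr)/D_j$ depends on $Q$ only through $m_{j-1}$ and $m_j$. Each of these has derivative $\tfrac12$ with respect to each of its two codepoints, so $J_{jj}=J_{j,j-1}+J_{j,j+1}$ for $n\ge 3$. In row $0$ this gives entries $(\beta,2\beta,\beta)$, with row sum $4\beta=\frac{2\pi}{nM}h(\pi/n)$. For $\kappa>0$ we have $h(\theta)>h(\pi/n)$ for $|\theta|<\pi/n$, so $M>\frac{2\pi}{n}h(\pi/n)$ and $4\beta<1$. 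Hence $DT(Q^*)\mathbf 1\neq\mathbf 1$ and $J_{00}=2\beta<\tfrac12<1-2\beta$.

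The failing input is Proposition~\ref{prop:neutral_mode}. Theorem~\ref{thm:equivariance} gives $T_{h_\phi}(Q+\phi\mathbf 1)=T_h(Q)+\phi\mathbf 1$. Replacing $T_{h_\phi}$ by $T_h$ requires $h_\phi=h$ for small $\phi$, i.e.\ a uniform density. In the von Mises family that is $\kappa=0$, the only case where $\frac{\pi}{nM}h(\pi/n)=\tfrac12$ and the stated values $\alpha=\tfrac12$, $\beta=\tfrac14$ are right.

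The circulance step also fails (the paper simply asserts it from the symmetry of $Q^*$), and your proposed repair does not fix it. Theorem~\ref{thm:equivariance} conjugates $T_h$ to $T_{h_\alpha}$. Transporting cell $j$ to $(-\pi/n,\pi/n)$ by $R_{-2\pi j/n}$ therefore turns row $j$ of $DT_h(Q^*)$ into the corresponding row of the Jacobian for the \emph{rotated} density $h(\cdot+2\pi j/n)$, not into row $0$ of $DT_h(Q^*)$. A choice of frame cannot change the matrix entries, which involve $h(q_j^*\pm\pi/n)$ and $\int_{R_j}h$ and genuinely depend on $j$. For even $n$, the cell centred at $\pi$ is symmetric, and the same computation gives diagonal entry $\frac{\pi}{nD_{n/2}}h(\pi+\pi/n)>\tfrac12$, whereas $J_{00}<\tfrac12$.

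Moreover, for $n\ge3$ and $q_j^*\notin\{0,\pi\}$, the function $\psi\mapsto h(q_j^*+\psi)$ is not even, so $N_j(Q^*)\neq0$. This has three consequences:
\begin{itemize}
\item the pairing argument of Proposition~\ref{prop:fixed_point} fails;
\item $Q^*$ is not a fixed point;
\item the shortcut $\partial_k C_j=\partial_k N_j/D_j$ is unavailable in those rows.
\end{itemize}
What survives from your plan is the periodic tridiagonal pattern and the row-$0$ entries $(\beta,2\beta,\beta)$. The circulant form with $\alpha=1-2\beta$ holds only for the uniform density.
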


\begin{proof}

Since each voronoi cell has length strictly less than $\pi$, we may write
\[
[T(Q)]_j = q_j + C_j(Q),
\]
where
\[
C_j(Q) = \frac{N_j(Q)}{D_j(Q)},
\]
with
\[
N_j(Q) = \int_{m_{j-1}}^{m_j} (\theta - q_j)\, h(\theta)\, d\theta,
\quad
D_j(Q) = \int_{m_{j-1}}^{m_j} h(\theta)\, d\theta.
\]

Thus,
\[
J_{jk} = \delta_{jk} + \frac{\partial C_j}{\partial q_k}.
\]

\medskip

Now using the quotient rule, we obtain
\[
\frac{\partial C_j}{\partial q_k}
=
\frac{D_j \frac{\partial N_j}{\partial q_k}
-
N_j \frac{\partial D_j}{\partial q_k}}{D_j^2}.
\]

\medskip

Now, we apply the Leibniz rule:
\[
\frac{\partial}{\partial q_k} \int_{a(Q)}^{b(Q)} f(\theta)\, d\theta
=
f(b)\frac{\partial b}{\partial q_k}
-
f(a)\frac{\partial a}{\partial q_k}
+
\int_a^b \frac{\partial f}{\partial q_k}\, d\theta.
\]

For
\[
f(\theta) = (\theta - q_j)h(\theta),
\]
we have the derivative
\[
\frac{\partial f}{\partial q_k} = -\delta_{jk} h(\theta).
\]

Thus,
\begin{align*}
\frac{\partial N_j}{\partial q_k}
&=
(m_j - q_j) h(m_j)\frac{\partial m_j}{\partial q_k}
- (m_{j-1} - q_j) h(m_{j-1})\frac{\partial m_{j-1}}{\partial q_k}
- \delta_{jk} D_j, \\
\frac{\partial D_j}{\partial q_k}
&=
h(m_j)\frac{\partial m_j}{\partial q_k}
- h(m_{j-1})\frac{\partial m_{j-1}}{\partial q_k}.
\end{align*}

\medskip

Since,
\[
m_j = \frac{q_j + q_{j+1}}{2},
\]
we have
\[
\frac{\partial m_j}{\partial q_k} =
\begin{cases}
\frac{1}{2}, & k = j \text{ or } j+1, \\
0, & \text{otherwise},
\end{cases}
\]
and similarly, we can find the derivatives for $m_{j-1}$.

\medskip

Now at the symmetric configuration,
\[
m_j = q_j^* + \frac{\pi}{n}, 
\quad 
m_{j-1} = q_j^* - \frac{\pi}{n}.
\]

From Proposition~\ref{prop:fixed_point},
\[
[T(Q^*)]_j = q_j^* \quad \Rightarrow \quad C_j(Q^*) = 0.
\]

Hence,
\[
N_j(Q^*) = 0,
\]
and therefore
\[
\frac{\partial C_j}{\partial q_k}\Big|_{Q^*}
=
\frac{1}{D_j} \frac{\partial N_j}{\partial q_k}\Big|_{Q^*}.
\]

\medskip

By using,
\[
m_j - q_j^* = \frac{\pi}{n}, 
\quad
m_{j-1} - q_j^* = -\frac{\pi}{n},
\]
\medskip
We obtain for $k=j+1$
\[
\frac{\partial N_j}{\partial q_{j+1}}
=
\frac{\pi}{n} h(m_j)\cdot \frac{1}{2}
=
\frac{\pi}{2n} h(m_j),
\]
hence
\[
\frac{\partial C_j}{\partial q_{j+1}}
=
\frac{\pi}{2n} \frac{h(m_j)}{D_j}.
\]

\medskip
Finally for $k=j-1$
\[
\frac{\partial C_j}{\partial q_{j-1}}
=
\frac{\pi}{2n} \frac{h(m_{j-1})}{D_j}.
\]

\medskip

By rotational symmetry of $Q^*$, all rows are cyclic shifts of the row corresponding to $j=0$. Thus it suffices to evaluate at $j=0$.

Let
\[
M = \int_{-\pi/n}^{\pi/n} h(\theta)\, d\theta,
\]
and note that
\[
h(m_0) = h(\pi/n), \quad h(m_{-1}) = h(-\pi/n) = h(\pi/n).
\]

Thus,
\[
D_0 = M,
\]
and
\[
h(m_0) = h(m_{-1}) = h(\pi/n).
\]

\medskip

Substituting, we obtain: $\frac{\partial C_0}{\partial q_{\pm1}}
=
\frac{\pi}{2nM} h(\pi/n).$

Finally, $J_{0,\pm1} = \frac{\pi}{2nM}h(\pi/n)$. Now, from the construction of the Jacobian matrix, it is of the following form
\[
 J(Q^*) = \begin{pmatrix}
\alpha & \beta & 0 & \cdots & 0 & \beta \\
\beta & \alpha & \beta & \cdots & 0 & 0 \\
0 & \beta & \alpha & \cdots & 0 & 0 \\
\vdots & \vdots & \ddots & \ddots & \ddots & \vdots \\
0 & 0 & \cdots & \beta & \alpha & \beta \\
\beta & 0 & \cdots & 0 & \beta & \alpha
\end{pmatrix}.
\]
We can obviously say that $\beta = \frac{\pi}{2nM}h(\pi/n)$.
Now, by using Proposition~\ref{prop:neutral_mode}, we have $DT_\kappa(Q^*)\,\mathbf 1=\mathbf 1$.

Hence, the row sum $\alpha+2\beta=1$ (for each row). Therefore, $\alpha=1-2\beta=1-\frac{\pi}{nM}h(\pi/n)$.

\medskip

Finally, from the construction of the matrix we can say that every row is the cyclic shift to the right of its earlier row's elements. By this argument, we can say that the Jacobian matrix is circulant. Also we can see that the matrix is periodic tridiagonal.

\end{proof}
\subsection{Eigenvalues and Stability Criterion}

Since $J(Q^*)$ is circulant, its eigenvalues are given by the discrete Fourier modes:
\[
\lambda_m(\kappa) = \alpha(\kappa) + 2\beta(\kappa) \cos\left(\frac{2\pi m}{n}\right),
\qquad m = 0,\dots,n-1.
\]

For $m=0$, we obtain
\[
\lambda_0 = \alpha + 2\beta = 1,
\]
which corresponds to rotational invariance and represents a neutral mode.

\begin{theorem}\label{thm:stability_final}
The symmetric fixed point $Q^*$ is linearly stable if and only if
\[
\max_{1 \leq m \leq n-1} |\lambda_m| < 1.
\]

Assuming $\beta(\kappa) > 0$, the first instability occurs when
\[
\lambda_{m_*} = -1,
\]

where
\[
m_* =
\begin{cases}
n/2, & n \text{ even}, \\
\lfloor n/2 \rfloor, & n \text{ odd and large}.
\end{cases}
\]
\end{theorem}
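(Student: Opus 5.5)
Using Lemma~\ref{lem:jacobian_structure_correct}, $J(Q^*)$ is the symmetric circulant matrix $\alpha I+\beta(S+S^{-1})$, where $S$ is the cyclic shift. By the circulant eigenvalue theorem of Section~\ref{sec:sec_2}, its spectrum is
\[
\lambda_m=\alpha+2\beta\cos(2\pi m/n),\qquad m=0,\dots,n-1,
\]
with an orthonormal basis of real Fourier modes, since $J$ is symmetric and hence diagonalizable.

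\emph{Linear stability criterion.} Proposition~\ref{prop:neutral_mode} shows that $m=0$ always gives $\lambda_0=1$ with eigenvector $\mathbf 1$. This is the rigid-rotation direction, so stability has to be understood on the quotient $\mathbb T^n/\mathbb S^1$, i.e.\ on the invariant complement $\mathbf 1^\perp$. Because $J$ is symmetric, $\mathbf 1^\perp$ is $J$-invariant and is spanned by the modes $m=1,\dots,n-1$. The induced linear map on the quotient therefore has spectrum $\{\lambda_m\}_{m\ge1}$. By the stability definition in Section~\ref{sec:sec_2}, the reduced linear system $\delta Q^{(t+1)}=J\,\delta Q^{(t)}$ restricted to $\mathbf 1^\perp$ is asymptotically stable if and only if $\max_{1\le m\le n-1}|\lambda_m|<1$. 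This gives the first assertion.

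\emph{Location of the first instability.} Substituting $\alpha=1-2\beta$ gives
\[
\lambda_m=1-2\beta\bigl(1-\cos(2\pi m/n)\bigr)=1-4\beta\sin^2(\pi m/n).
\]
If $\beta>0$, every $\lambda_m$ with $m\neq0$ is strictly less than $1$. So the bound $\lambda_m<1$ on the upper side can never be lost, and instability can only occur through some $\lambda_m$ reaching $-1$, i.e.\ a flip. As $\kappa$ varies continuously, $\beta(\kappa)$ varies continuously, and each $\lambda_m$ is decreasing in $\beta$. The first mode to hit $-1$ is therefore the one minimizing $\lambda_m$, equivalently the one maximizing $\sin^2(\pi m/n)$.
\begin{itemize}
\item For $n$ even, the maximum is attained uniquely at $m=n/2$, where $\sin^2=1$. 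Here $\lambda_{n/2}=1-4\beta$, and the instability occurs at $\beta=1/2$.
\item For $n$ odd, the maximum is attained at the pair $m=\lfloor n/2\rfloor$ and $\lceil n/2\rceil$. These give equal eigenvalues, since $\lambda_m=\lambda_{n-m}$, so we may take $m_*=\lfloor n/2\rfloor$. The critical value is $\beta=1/(4\cos^2(\pi/2n))$, which tends to $1/4$ as $n\to\infty$; this is why the statement says ``large'' $n$.
\end{itemize}

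\emph{Main obstacle.} The delicate point is the quotient reduction. We must justify that the neutral eigenvalue $\lambda_0=1$ does not count against stability and that the remaining spectrum governs the dynamics transverse to the rotation orbit. Here we use the symmetry of $J$, which makes $\mathbf 1^\perp$ invariant. We would also remark that the criterion is stated for the linearization. The claim about the ``first'' instability presumes that $\beta(\kappa)$ increases through the threshold as $\kappa$ varies. We would state this as the operative interpretation, namely the first mode to lose stability as $\beta$ increases, rather than attempt to prove monotonicity of $\beta$ in $\kappa$.
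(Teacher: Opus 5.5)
Your argument is correct and follows essentially the paper's route: read off the circulant spectrum $\lambda_m=\alpha+2\beta\cos(2\pi m/n)$, set aside the neutral rotation mode $\lambda_0=1$, and use $\beta>0$ (so $\lambda_m<1$ for $m\neq 0$ and $\lambda_{m_*}\le\lambda_m$) to conclude that stability can only be lost through $\lambda_{m_*}=-1$. Your use of the symmetry of $J$ to make $\mathbf 1^\perp$ an invariant complement sharpens the paper's bare assertion that the $m=0$ mode ``does not affect stability.''

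There is one slip in an aside. For odd $n$ the threshold solves $1-4\beta\cos^2(\pi/(2n))=-1$, so $\beta=1/(2\cos^2(\pi/(2n)))\to 1/2$, which matches the even case. It is not $1/(4\cos^2(\pi/(2n)))\to 1/4$. Also, $\lfloor n/2\rfloor$ minimizes the cosine for every odd $n\ge 3$, so this limit does not explain the word ``large.'' Finally, you do not need monotonicity of $\beta(\kappa)$ for the ``first instability'' claim. Since $\lambda_{m_*}\le\lambda_m$ at every $\beta>0$, the first parameter value where any nontrivial eigenvalue reaches $-1$ is one where $\lambda_{m_*}=-1$.
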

\begin{proof}
Since $J(Q^*)$ is a real circulant matrix (by Lemma~\ref{lem:jacobian_structure_correct}), it is diagonalizable by the discrete Fourier transform, and its eigenvalues are given by $\lambda_m$.

The linearized system is stable if and only if the spectral radius satisfies
\[
\rho(J) = \max_m |\lambda_m| < 1,
\]
which is the standard discrete-time linear stability condition.

The mode $m=0$ corresponds to rotational invariance and satisfies $\lambda_0 = 1$, hence is neutral and does not affect stability.

Thus stability requires
\[
|\lambda_m| < 1 \quad \forall \hspace{0.1cm}m \neq 0.
\]

Now,
\[
\lambda_m = \alpha + 2\beta \cos\left(\frac{2\pi m}{n}\right).
\]

Since $\cos(\theta) \in [-1,1]$, the smallest eigenvalue occurs at the mode where cosine is minimized. Hence,
\[
\min_m \lambda_m = \alpha - 2\beta,
\]
which is attained at
\[
m_* =
\begin{cases}
n/2, & n \text{ even}, \\
\lfloor n/2 \rfloor, & n \text{ odd and large}.
\end{cases}
\]

Since $\lambda_0 = 1$ is fixed, stability requires that all other eigenvalues satisfy
\[
|\lambda_m| < 1, \quad m \neq 0.
\]

For $\beta > 0$, the eigenvalues are maximized at $m=0$, and hence
\[
\lambda_m \leq \lambda_0 = 1 \quad \text{for all } m.
\]
Thus no eigenvalue can exceed $1$, and loss of stability can only occur when the smallest eigenvalue decreases to $-1$.

Therefore, the first instability occurs when the most negative eigenvalue satisfies
\[
\lambda_{m_*} = -1.
\]
This completes the proof.
\end{proof}

\begin{corollary}
\label{cor:stability}
Assume $\beta > 0$. The symmetric fixed point $Q^*$ is linearly stable if and only if
\[
\frac{h(\pi/n)}{M}\cdot \frac{\pi}{n}
<
\frac{2}{1 - \cos\left(\frac{2\pi k^*}{n}\right)}.
\]
\end{corollary}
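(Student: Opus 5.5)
The plan is to substitute the explicit entries from Lemma~\ref{lem:jacobian_structure_correct} into the eigenvalue formula and then apply Theorem~\ref{thm:stability_final}. Put $x := \frac{\pi}{n}\frac{h(\pi/n)}{M}$, so that $\beta = x/2$ and $\alpha = 1-x$. Then
\[
\lambda_m = 1 - x + x\cos\!\left(\tfrac{2\pi m}{n}\right) = 1 - x\left(1-\cos\!\left(\tfrac{2\pi m}{n}\right)\right), \qquad m=0,\dots,n-1 .
\]
Since $\beta>0$ we have $x>0$. Also $1-\cos(2\pi m/n)\in(0,2]$ for $m\neq 0$. It follows that $\lambda_m<1$ for every $m\neq 0$. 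This is the observation already made in the proof of Theorem~\ref{thm:stability_final}: no eigenvalue other than the neutral one $\lambda_0=1$ can reach $+1$.

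By Theorem~\ref{thm:stability_final}, stability is equivalent to $|\lambda_m|<1$ for all $m\neq0$. Given the upper bound above, this reduces to the lower bounds $\lambda_m>-1$ for all $m \neq 0$, that is,
\[
x\left(1-\cos\!\left(\tfrac{2\pi m}{n}\right)\right) < 2 \quad \text{for all } 1\le m\le n-1 .
\]
Because $x>0$, the binding constraint comes from the index that maximizes $1-\cos(2\pi m/n)$. This is the index $m_*$ of Theorem~\ref{thm:stability_final}, which I identify with the $k^*$ in the statement. It is $n/2$ when $n$ is even, and $\lfloor n/2\rfloor$ (equivalently $\lceil n/2\rceil$, by the symmetry $m\mapsto n-m$) when $n$ is odd. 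Dividing by the positive quantity $1-\cos(2\pi k^*/n)$ gives exactly the stated inequality. Each step is an equivalence, so both directions of the ``if and only if'' follow: if the inequality holds at $k^*$, it holds at every $m$, and conversely.

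The main point needing care is the identification of $k^*$ as the true maximizer of $1-\cos(2\pi m/n)$ over $1\le m\le n-1$. This must hold for every $n\ge2$, not merely ``large'' odd $n$. I would prove it directly. The function $m\mapsto 1-\cos(2\pi m/n)$ is symmetric under $m\mapsto n-m$ and increasing on $0\le m\le n/2$. So the maximum over integers is attained at $m=\lfloor n/2\rfloor$, which removes the ``large'' qualifier. In particular, for even $n$ the condition becomes $x<1$, i.e.\ $\frac{\pi}{n}h(\pi/n)<M$.

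I would also remark that the case $n=2$ must be checked against the hypothesis that each cell has length below $\pi$. For $n=2$ the cells have length exactly $\pi$, so the statement is understood to apply for $n\ge3$.
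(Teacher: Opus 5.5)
Your proposal is correct and follows essentially the same route as the paper. Both arguments rewrite $\lambda_k = 1-2\beta\bigl(1-\cos(2\pi k/n)\bigr)$ using $\alpha+2\beta=1$, observe that no nontrivial eigenvalue can reach $+1$, reduce stability to $\lambda_{k^*}>-1$ at the cosine-minimizing mode, and substitute $\beta=\frac{1}{2}\frac{h(\pi/n)}{M}\frac{\pi}{n}$. Your extra care tightens the paper's argument without changing it: you state the strict bound $\lambda_m<1$ for $m\neq 0$, where the paper only records $\lambda_k\le 1$, and you check that $k^*=\lfloor n/2\rfloor$ maximizes $1-\cos(2\pi m/n)$ for every $n$, not only for large odd $n$.
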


\begin{proof}
From the eigenvalue formula for the circulant Jacobian,
\[
\lambda_k = \alpha + 2\beta \cos\left(\frac{2\pi k}{n}\right),
\qquad k=0,\dots,n-1,
\]
and the identity
\[
\alpha + 2\beta = 1,
\]
we rewrite
\[
\lambda_k
= 1 - 2\beta\left(1 - \cos\left(\frac{2\pi k}{n}\right)\right).
\]

The mode $k=0$ gives $\lambda_0 = 1$, corresponding to rotational invariance, and is therefore neutrally stable.

Hence linear stability requires
\[
|\lambda_k| < 1 \quad \text{for all } k \neq 0.
\]

\medskip

Since $\beta > 0$, the function
\[
\cos\left(\frac{2\pi k}{n}\right)
\]
is decreasing on $k \in [0, n/2]$, and therefore $\lambda_k$ is minimized at
\[
k_* =
\begin{cases}
n/2, & n \text{ even}, \\
\lfloor n/2 \rfloor, & n \text{ odd and large}.
\end{cases}
\]

Thus the smallest eigenvalue is
\[
\lambda_{\min}
= 1 - 2\beta\left(1 - \cos\left(\frac{2\pi}{n}k_*\right)\right).
\]

Since $\lambda_k \le 1$ for all $k$, instability can only occur when the smallest eigenvalue move towards left of $-1$. Therefore, stability is equivalent to
\[
\lambda_{\min} > -1.
\]

Substituting, the value of $\lambda_{min}$,
\[
1 - 2\beta\left(1 - \cos\left(\frac{2\pi}{n}k_*\right)\right) > -1.
\]
Simplifying
\[
\beta\left(1 - \cos\left(\frac{2\pi}{n}k_*\right)\right) < 1.
\]

Now using $\beta = \frac{1}{2}\frac{h(\pi/n)}{M}\cdot \frac{\pi}{n},$
we obtain
\[
\frac{1}{2}\frac{h(\pi/n)}{M}\cdot \frac{\pi}{n}
\left(1 - \cos\left(\frac{2\pi}{n}k_*\right)\right) < 1.
\]

This yields,
\[
\frac{h(\pi/n)}{M}\cdot \frac{\pi}{n}
<
\frac{2}{1 - \cos\left(\frac{2\pi}{n}k_*\right)}.
\]
This completes the proof.
\end{proof}

\section{Applications}
\label{sec:sec_5}
\subsection{Flip-Bifurcation Analysis}
We regard the Lloyd map associated with the von Mises density as a smooth one-parameter family of maps
\[
T_\kappa : M_n \to M_n,
\qquad
\kappa \ge 0,
\]
where $M_n$ denotes the configuration space of ordered $n$-point configurations on $\mathbb S^1$ modulo global rotations.

Define
\[
\widetilde{M}_n
:=
\Big\{
Q=(q_1,\dots,q_n)\in [0,2\pi)^n
:
0\le q_1<q_2<\cdots<q_n<2\pi
\Big\}.
\]

We introduce the equivalence relation
\[
Q \sim Q+\phi \mathbf 1,
\qquad
\phi\in\mathbb S^1,
\]
where
\[
\mathbf 1=(1,\dots,1)\in\mathbb R^n.
\]

The reduced configuration space is then defined by
\[
M_n := \widetilde{M}_n/\sim,
\]
which removes the neutral direction associated with rigid rotations of the circle.

For every $\kappa\ge0$, the equally spaced configuration
\[
Q^*
=
\left(
\frac{2\pi j}{n}
\right)_{j=1}^n
\qquad (\mathrm{mod}\;2\pi)
\]
defines a symmetric fixed point of the Lloyd map:
\[
T_\kappa(Q^*) = Q^*.
\]

The map $T_\kappa$ depends smoothly on both the configuration $Q$ and the parameter $\kappa$. Indeed, the voronoi boundaries depend smoothly on the codepoints, and the centroid map is defined through integrals of smooth functions over smoothly varying intervals. In particular, the Jacobian matrix $J(\kappa)
=DT_\kappa(Q^*)$ depends smoothly in $\kappa$.
\medskip

\begin{theorem}
\label{thm:no_flip}
Let $T_\kappa : M_n \to M_n$
denote the Lloyd map on $\mathbb S^1$ associated with the von Mises density $h(\theta;\kappa)=e^{\kappa\cos\theta},$
and let $Q^*=(q_1^*,\dots,q_n^*)$ be the equally spaced configuration $q_j^*=\frac{2\pi j}{n}.$ Define $M(\kappa)
=
\int_{-\pi/n}^{\pi/n}
e^{\kappa\cos\theta}\,d\theta,$ and $F(\kappa)
=
\frac{\pi}{nM(\kappa)}
e^{\kappa\cos(\pi/n)}.$

Suppose
\[
F(\kappa)
<
\frac{2}{1-\cos(2\pi m_*/n)},
\]
where
\[
m_*=
\begin{cases}
n/2, & n \text{ even},\\
\lfloor n/2\rfloor, & n \text{ odd and large}.
\end{cases}
\]

Then every nontrivial eigenvalue of the Jacobian
\[
J(Q^*) = DT_\kappa(Q^*)
\]
satisfies $|\lambda_m(\kappa)|<1,
\qquad m=1,\dots,n-1.$
Consequently, no flip (period-doubling) bifurcation can occur.
\end{theorem}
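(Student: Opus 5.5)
The plan is to reduce the statement to Lemma~\ref{lem:jacobian_structure_correct} and Corollary~\ref{cor:stability}, applied to the unnormalized density $h(\theta;\kappa)=e^{\kappa\cos\theta}$.

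First I check that the normalization is irrelevant. The Lloyd map is a ratio of integrals of $h$, so $\alpha$ and $\beta$ depend only on $h(\pi/n)/M$, which is invariant under rescaling $h$. With this density, $h(\pi/n)=e^{\kappa\cos(\pi/n)}$ and $M=M(\kappa)$. Hence Lemma~\ref{lem:jacobian_structure_correct} gives
\[
\beta(\kappa)=\tfrac12 F(\kappa), \qquad \alpha(\kappa)=1-F(\kappa).
\]
In particular $\beta(\kappa)>0$ for every $\kappa\ge 0$, because both $e^{\kappa\cos(\pi/n)}$ and $M(\kappa)$ are strictly positive. This supplies the standing hypothesis of Theorem~\ref{thm:stability_final} and Corollary~\ref{cor:stability}. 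The Jacobian is circulant, so its eigenvalues are
\[
\lambda_m(\kappa)=1-F(\kappa)\bigl(1-\cos(2\pi m/n)\bigr), \qquad m=0,\dots,n-1.
\]

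Next I bound the nontrivial modes $m=1,\dots,n-1$ on each side.

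\textbf{Upper side.} Here $1-\cos(2\pi m/n)>0$ and $F>0$, so $\lambda_m<1$ strictly. This strictness is what excludes $m\neq 0$ from the neutral direction.

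\textbf{Lower side.} I need $\lambda_m>-1$ for every $m$. Since $\lambda_m$ is decreasing in $1-\cos(2\pi m/n)$, it suffices to handle the mode maximizing this quantity. Then the hypothesis gives
\[
F(\kappa)\bigl(1-\cos(2\pi m_*/n)\bigr)<2,
\]
which is exactly $\lambda_{m_*}>-1$, as in the computation of Corollary~\ref{cor:stability}. Combining the two sides yields $|\lambda_m|<1$ for $m=1,\dots,n-1$.

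The main obstacle is odd $n$. The maximizer of $1-\cos(2\pi m/n)$ over integers $m$ is $m=\lfloor n/2\rfloor$ (and equivalently $\lceil n/2\rceil$, by the symmetry $m\mapsto n-m$). I will verify this directly, since $\cos(2\pi m/n)$ depends only on $\min(m,n-m)$ and is decreasing in that quantity on $[0,n/2]$. So the qualifier ``large'' is not actually needed, and the bound covers all modes. Without this check, one would have to compare against $\cos(\pi)=-1$, which requires the stronger condition $F<1$.

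Finally, a flip bifurcation at $\kappa$ requires an eigenvalue of $DT_\kappa$ on the reduced space $M_n$ to cross $-1$. Quotienting by the rotation direction $\mathbf 1$ removes only $\lambda_0=1$ (Proposition~\ref{prop:neutral_mode}). The reduced spectrum is therefore $\{\lambda_m\}_{m\ge 1}$, all of which lie strictly inside $(-1,1)$. Hence no eigenvalue equals $-1$, and no flip bifurcation occurs.
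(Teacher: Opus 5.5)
Your proposal is essentially the paper's own proof. Like the paper, you read off the circulant Jacobian with first row $(1-F,\,F/2,\,0,\dots,0,\,F/2)$ from Lemma~\ref{lem:jacobian_structure_correct}, get $\lambda_m<1$ from $\cos(2\pi m/n)<1$ and $\lambda_m\ge\lambda_{m_*}>-1$ from the hypothesis, and then rule out a crossing of $-1$ on the quotient. Your additions only make explicit what the paper leaves implicit: the scale invariance of $h(\pi/n)/M$ for the unnormalized density, the explicit use of $F>0$, and the check that $m_*=\lfloor n/2\rfloor$ is the extremal mode for every odd $n$, so ``large'' is superfluous.

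Be aware, however, that this shared route is only as sound as Lemma~\ref{lem:jacobian_structure_correct} and Proposition~\ref{prop:neutral_mode}, and these do not hold for $\kappa>0$. The identity $T_\kappa(Q+\phi\mathbf 1)=T_\kappa(Q)+\phi\mathbf 1$ needs the density to be rotated as well (Theorem~\ref{thm:equivariance}). A direct Leibniz computation of the diagonal entry in row $0$ gives $J_{00}=F(\kappa)$ rather than $1-F(\kappa)$, and the two agree only when $F=\tfrac12$, i.e.\ $\kappa=0$. Moreover, for $n\ge3$ the cell $(\pi/n,3\pi/n)$ of $q_1^*$ carries a strictly decreasing density, so $Q^*$ is not even a fixed point.
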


\begin{proof}

From Lemma~\ref{lem:jacobian_structure_correct}, the Jacobian matrix at the symmetric configuration is circulant with first row
\[
(1-F(\kappa),\,F(\kappa)/2,\,0,\dots,0,\,F(\kappa)/2).
\]

Therefore, by the theory of circulant matrices, the eigenvectors are the discrete Fourier modes
\[
v^{(m)}
=
\left(
1,
e^{-2\pi i m/n},
\dots,
e^{-2\pi i m(n-1)/n}
\right)^T,
\]
and the corresponding eigenvalues are
\[
\lambda_m(\kappa)
=
1-F(\kappa)
+
F(\kappa)\cos\left(\frac{2\pi m}{n}\right),
\qquad
m=0,\dots,n-1.
\]

For $m=0$, we obtain
\[
\lambda_0
=
1-F(\kappa)+F(\kappa)
=
1.
\]

This eigenvalue corresponds to the rotational symmetry of the problem. Since, rigid rotations leave the voronoi structure invariant, so the Lloyd map is equivariant under the action of $\mathbb S^1$. Hence the direction
\[
(1,\dots,1)
\]
is neutral.

We therefore restrict attention to the quotient space modulo rotations, equivalently to the modes
\[
m=1,\dots,n-1.
\]

Since
\[
-1\le
\cos\left(\frac{2\pi m}{n}\right)
\le1,
\]
the smallest eigenvalue occurs for the mode minimizing the cosine term, namely
\[
m_*
=
\begin{cases}
n/2, & n \text{ even},\\
\lfloor n/2\rfloor, & n \text{ odd and large}.
\end{cases}
\]

Hence
\[
\lambda_m
\ge
\lambda_{m_*}
=
1-F(\kappa)
+
F(\kappa)\cos\left(\frac{2\pi m_*}{n}\right).
\]

The condition
\[
F(\kappa)
<
\frac{2}
{1-\cos(2\pi m_*/n)}
\]
implies
\[
F(\kappa)
\bigl(
1-\cos(2\pi m_*/n)
\bigr)
<2.
\]

Rearranging,
\[
1
-
F(\kappa)
+
F(\kappa)\cos\left(\frac{2\pi m_*}{n}\right)
>-1.
\]
Therefore, $\lambda_{m_*}>-1.$
Since all remaining eigenvalues satisfy $\lambda_m\ge\lambda_{m_*},$

it follows that
\[
\lambda_m>-1
\qquad
\text{for all }m=1,\dots,n-1.
\]

On the other hand,
\[
\lambda_m
=
1-F(\kappa)
+
F(\kappa)\cos\left(\frac{2\pi m}{n}\right)
<1
\]
for every nontrivial mode because
\[
\cos\left(\frac{2\pi m}{n}\right)<1
\qquad
(m\neq0).
\]

Hence $-1<\lambda_m<1,
\qquad
m=1,\dots,n-1$.
Therefore, all nontrivial eigenvalues lie strictly inside the unit disk. It ensures the stability of the system. Finally, a flip (period-doubling) bifurcation in a discrete dynamical system requires an eigenvalue to cross the point $-1$ on the unit circle. Since $\lambda_m(\kappa)>-1$
for all nontrivial modes, such a crossing cannot occur. Therefore no period-doubling instability arises at the symmetric configuration.
\end{proof}

\subsubsection{Stability Diagram Analysis}
We provide Algorithm~\ref{alg:stability_diagram} to empirically visualise the stability diagram. The algorithm is given as follows. Figure~\ref{fig:1} represents the stability diagram of the fixed points corresponding to the concentration parameter. Figure~\ref{fig:2} depicts the stability domain with respect to eigenvalue spectrum.
\begin{figure}[htbp]
    \centering
    \begin{subfigure}[b]{0.45\textwidth}
        \centering
        \includegraphics[width=\textwidth]{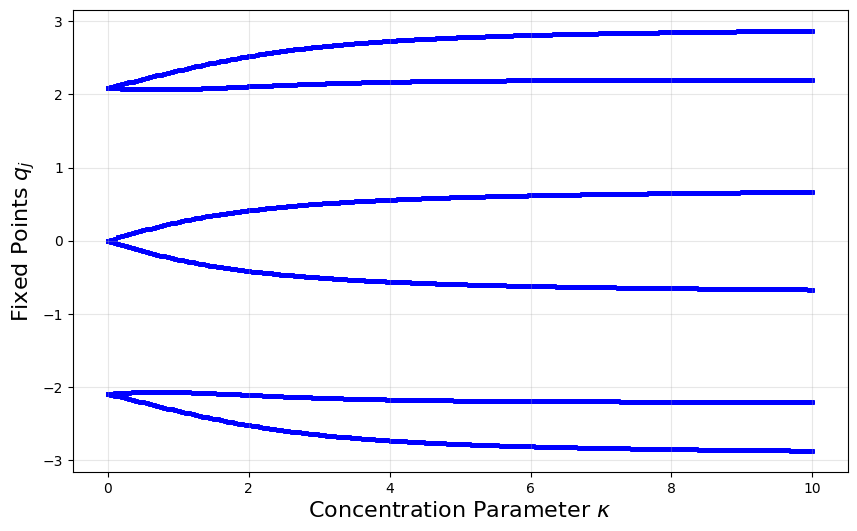} 
        \caption{Empirical observation for the stability of the fixed points with respect to the concentration parameter. Y axis represents the fixed points and X axis represents the concentration parameter.}
        \label{fig:1}
    \end{subfigure}
    \hfill
    \begin{subfigure}[b]{0.45\textwidth}
        \centering
        \includegraphics[width=\textwidth]{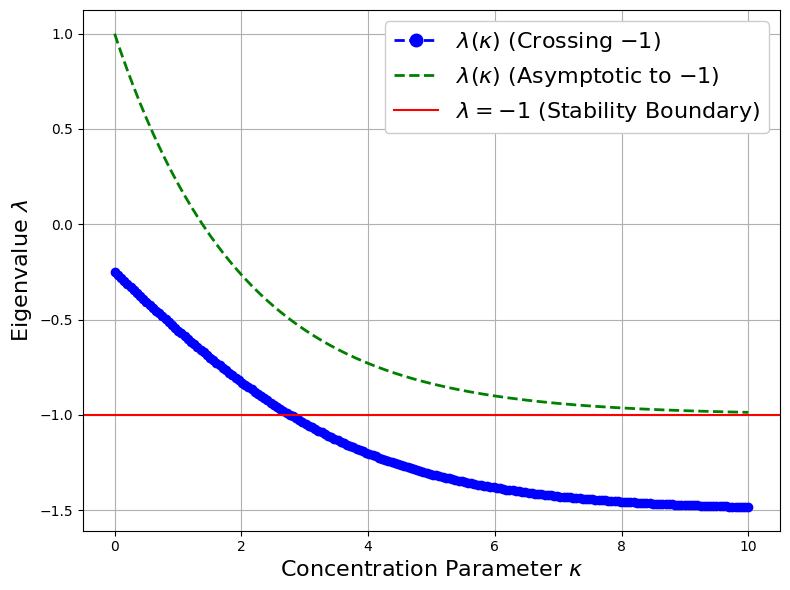} 
        \caption{Empirical observation of the stability criteria. The dashed curve shows the asymptotically stable nature of the eigenvalue. The solid line is the stability boundary. The thicker curve corresponds to instability.}
        \label{fig:2}
    \end{subfigure}
    \caption{Empirical Observation of Stability diagrams}
    \label{fig:both_figures}
\end{figure}

\begin{algorithm}[H]
\caption{Stability Diagram for the Lloyd Map on $\mathbb S^1$}
\label{alg:stability_diagram}
\begin{algorithmic}[1]

\Require Parameter interval $[\kappa_{\min},\kappa_{\max}]$, number of parameter samples $N_\kappa$, number of codepoints $n$, total iterations $N_{\mathrm{iter}}$, transient iterations $N_{\mathrm{trans}}$

\Ensure Arrays $(\mathcal K,\mathcal Q)$ for stability plotting

\vspace{0.2cm}

\State Compute parameter step size
\[
\Delta\kappa
=
\frac{\kappa_{\max}-\kappa_{\min}}{N_\kappa-1}
\]

\State Initialize empty arrays
\[
\mathcal K \gets \varnothing,
\qquad
\mathcal Q \gets \varnothing
\]

\vspace{0.2cm}

\for{$i=1,\dots,N_\kappa$}

    \State Set
    \[
    \kappa
    =
    \kappa_{\min}
    +
    (i-1)\Delta\kappa
    \]

    \State Initialize ordered random configuration
    \[
    Q^{(0)}
    \in [0,2\pi)^n
    \]

    \vspace{0.1cm}

    \for{$t=1,\dots,N_{\mathrm{iter}}$}

        \State Apply Lloyd update
        \[
        Q^{(t)}
        =
        T_\kappa(Q^{(t-1)})
        \]

        \State Wrap angles into $[0,2\pi)$:
        \[
        Q^{(t)}
        \gets
        Q^{(t)} \bmod 2\pi
        \]

        \State Sort configuration in increasing order

        \State Remove rotational drift:
        \[
        \bar q
        =
        \frac1n
        \sum_{j=1}^n q_j^{(t)}
        \]

        \[
        Q^{(t)}
        \gets
        Q^{(t)}-\bar q\,\mathbf1
        \]

        if {$t>N_{\mathrm{trans}}$}

            \for{$j=1,\dots,n$}

                \State Append $\kappa$ to $\mathcal K$

                \State Append $q_j^{(t)}$ to $\mathcal Q$

            end for

        end if

    end for

end for

\vspace{0.2cm}

\Return $(\mathcal K,\mathcal Q)$

\end{algorithmic}
\end{algorithm}

\begin{corollary} \label{cor:necessary_flip} Let $F(\kappa) = \frac{\pi}{nM(\kappa)}h(\frac{\pi}{n}, \kappa)$, where \[ M(\kappa) = \int_{-\pi/n}^{\pi/n} h(\theta;\kappa)\,d\theta. \] If the symmetric fixed point $Q^*$ loses stability through a flip (period-doubling) bifurcation, then the following condition hold\[ F(\kappa_c) = \frac{ 2 }{ 1-\cos\left(\frac{2\pi m_*}{n}\right) }, \] where \[ m_*= \begin{cases} n/2, & n \text{ even},\\ \lfloor n/2\rfloor, & n \text{ odd and large}. \end{cases} \]. \end{corollary}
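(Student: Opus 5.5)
The plan is to use the explicit spectrum from Lemma~\ref{lem:jacobian_structure_correct} together with the definition of a flip bifurcation: such a bifurcation at $\kappa_c$ means that, as $\kappa$ passes through $\kappa_c$, an eigenvalue of $J(\kappa)=DT_\kappa(Q^*)$ restricted to the nontrivial modes crosses the unit circle at $-1$. First I rewrite the eigenvalues in terms of $F$. By Lemma~\ref{lem:jacobian_structure_correct}, $\beta=F(\kappa)/2$ and $\alpha=1-F(\kappa)$, so
\[
\lambda_m(\kappa)=1-F(\kappa)\Bigl(1-\cos\bigl(\tfrac{2\pi m}{n}\bigr)\Bigr),\qquad m=0,\dots,n-1.
\]
The mode $m=0$ is the neutral rotation mode of Proposition~\ref{prop:neutral_mode}; it is removed in the quotient $M_n$ and plays no role.

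Next I locate the mode that can reach $-1$. Since $h>0$ and $M>0$, we have $F(\kappa)>0$, so $\beta>0$. For each nontrivial mode, $\lambda_m<1$, because $\cos(2\pi m/n)<1$ for $m\neq 0$. Hence no eigenvalue can leave the unit disc through $+1$. As the $\lambda_m$ are real, the only possible exit is through $-1$, consistent with the hypothesis of a flip. Moreover $\lambda_m\ge\lambda_{m_*}$, where $m_*$ minimizes $\cos(2\pi m/n)$, exactly as in the proof of Theorem~\ref{thm:stability_final}. So the first eigenvalue to reach $-1$ is $\lambda_{m_*}$.

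Then I apply a continuity and contrapositive argument. Suppose $Q^*$ loses stability through a flip at $\kappa_c$. On the stable side, Theorem~\ref{thm:no_flip} (equivalently Corollary~\ref{cor:stability}) shows that the strict inequality $F<2/(1-\cos(2\pi m_*/n))$ forces $|\lambda_m|<1$ for all $m\ne0$. So at $\kappa_c$ this strict inequality must fail, since otherwise the fixed point would still be stable with no crossing of $-1$. On the other hand, since $\lambda_{m_*}$ is attained at $-1$ there, $\lambda_{m_*}(\kappa_c)=-1$. Solving $1-F(\kappa_c)(1-\cos(2\pi m_*/n))=-1$ gives
\[
F(\kappa_c)=\frac{2}{1-\cos(2\pi m_*/n)}.
\]
The denominator is nonzero because $m_*\neq0$, so the division is legitimate.

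The main obstacle is justifying that the crossing at $\kappa_c$ actually yields \emph{equality}, not merely non-strict inequality. For this I use the smooth, hence continuous, dependence of $J(\kappa)$ on $\kappa$ noted before Theorem~\ref{thm:no_flip}. It makes $F$, and therefore $\lambda_{m_*}$, continuous in $\kappa$. By the intermediate value theorem, a transition from $\lambda_{m_*}>-1$ to $\lambda_{m_*}<-1$ passes through the value $-1$. I would define $\kappa_c$ as that crossing point, or equivalently read the definition of flip bifurcation as requiring $\lambda_{m_*}(\kappa_c)=-1$ exactly. A secondary caveat concerns odd $n$, where $m_*=\lfloor n/2\rfloor$. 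Here I rely only on $m_*$ minimizing the cosine among $1\le m\le n-1$, which holds for all $n$, so the "large" qualifier is harmless.
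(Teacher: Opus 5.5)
Your proposal is correct and is essentially the paper's argument: express $\lambda_m=1-F(\kappa)\bigl(1-\cos(2\pi m/n)\bigr)$, observe that the nontrivial eigenvalues can only leave the unit disc through $-1$ with $\lambda_{m_*}$ the smallest, and solve $\lambda_{m_*}(\kappa_c)=-1$ for $F(\kappa_c)$. Two points the paper leaves implicit are made explicit in your version: the continuity and intermediate-value argument that gives exact equality at $\kappa_c$, and the check that $m_*=\lfloor n/2\rfloor$ minimizes the cosine for every odd $n$, not only for large $n$.
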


\begin{proof}

From Theorem~\ref{thm:no_flip}, the eigenvalues of the Jacobian
\[
J(Q^*) = DT_\kappa(Q^*)
\]
are
\[
\lambda_m(\kappa)
=
1-F(\kappa)
+
F(\kappa)\cos\left(\frac{2\pi m}{n}\right),
\qquad
m=0,\dots,n-1.
\]

The mode $m=0$ satisfies
\[
\lambda_0=1,
\]
corresponding to the neutral rotational symmetry. Hence, the stability of the symmetric fixed point on the reduced configuration space $M_n$ is determined entirely by the nontrivial modes
\[
m=1,\dots,n-1.
\]

A flip (period-doubling) bifurcation in a discrete dynamical system can occur only if a real eigenvalue crosses the point $-1$ on the unit circle. Therefore, a necessary condition for a flip instability is the existence of some mode $m$ satisfying
\[
\lambda_m(\kappa_c)=-1
\]
for some critical parameter value $\kappa_c$.

Substituting the eigenvalue formula yields
\[
1-F(\kappa_c)
+
F(\kappa_c)\cos\left(\frac{2\pi m}{n}\right)
=
-1.
\]

Rearranging,
\[
F(\kappa_c)
\left(
1-
\cos\left(\frac{2\pi m}{n}\right)
\right)
=
2.
\]

Hence,
\[
F(\kappa_c)
=
\frac{
2
}{
1-\cos\left(\frac{2\pi m}{n}\right)
}.
\]

Among all Fourier modes, the smallest eigenvalue is obtained by minimizing the cosine term. Since
\[
\cos\left(\frac{2\pi m}{n}\right)
\]
attains its minimum at
\[
m_*=
\begin{cases}
n/2, & n \text{ even},\\
\lfloor n/2\rfloor, & n \text{ odd and large},
\end{cases}
\]
the first possible loss of stability must occur through this critical mode. This completes the proof.

\end{proof}

\begin{remark}
    The above corollary gives the condition for the occurrence of the instability. Furthermore, the value of $\kappa_c$ could be computed to check for any correlation that may exist by considering suitable values of $n$. Combining, those values, a proper conclusion could be drawn for correct scenario of the instability in the system.
\end{remark}

\subsection{Lyapunov Spectrum via QR Method}

To characterize the stability of the Lloyd map, we compute the Lyapunov exponents using the standard QR orthogonalization method for discrete dynamical systems. We have discussed an algorithm (Algorithm~\ref{alg:lyapunov_correct}) for empirical observation of the maximal Lyapunov exponent. Figure~\ref{fig:3} represents the empirical observation for the maximal Lyapunov exponent.

Let
\[
Q^{(t+1)} = T_\kappa(Q^{(t)})
\]
denote the Lloyd map on $(S^1)^n$. The linearized perturbation dynamics satisfies
\[
\delta Q^{(t+1)}
=
DT_\kappa(Q^{(t)})\,\delta Q^{(t)}.
\]

The Jacobian matrix is approximated numerically using centered finite differences together with the wrapped angular difference
\[
\operatorname{wrap}(\theta)
=
((\theta+\pi)\bmod 2\pi)-\pi.
\]

\begin{algorithm}[H]
\caption{Lyapunov Spectrum for the Lloyd Map}
\label{alg:lyapunov_correct}
\begin{algorithmic}[1]

\Require Concentration parameter $\kappa$, number of codepoints $n$, transient iterations $N_{\mathrm{trans}}$, sampling iterations $N_{\mathrm{iter}}$, finite-difference parameter $\varepsilon$

\Ensure Lyapunov exponents
\[
(\Lambda_1,\dots,\Lambda_n)
\]

\State Initialize random ordered configuration
\[
Q^{(0)} \in \widetilde M_n
\]

\State Initialize orthonormal basis
\[
U^{(0)} = I_n
\]

\State Initialize accumulators
\[
S_j = 0,
\qquad j=1,\dots,n
\]

\medskip

\for{$t=1$ to $N_{\mathrm{trans}}$}

    \[
    Q^{(t)} = T_\kappa(Q^{(t-1)})
    \]

end for

\medskip

\State Set
\[
Q \gets Q^{(N_{\mathrm{trans}})}
\]

\medskip

\for{$t=1$ to $N_{\mathrm{iter}}$}

    \State Approximate Jacobian by finite differences

    \for{$k=1$ to $n$}

        \[
        Q^{\pm} = Q \pm \varepsilon e_k
        \]

        \[
        J_t(:,k)
        =
        \frac{
        \operatorname{wrap}
        \left(
        T_\kappa(Q^{+})
        -
        T_\kappa(Q^{-})
        \right)
        }{
        2\varepsilon
        }
        \]

    end for

    \medskip

    \State Propagate tangent vectors
    \[
    V = J_t U^{(t-1)}
    \]

    \State Compute QR decomposition
    \[
    V = \mathcal{Q}\mathcal{R}
    \]

    \State Update orthonormal basis
    \[
    U^{(t)} = \mathcal{Q}
    \]

    \medskip

    \for{$j=1$ to $n$}

        \[
        S_j
        \gets
        S_j + \log |\mathcal{R}_{jj}|
        \]

    end for

    \medskip

    \State Advance trajectory
    \[
    Q \gets T_\kappa(Q)
    \]

end for

\medskip

\for{$j=1$ to $n$}

    \[
    \Lambda_j
    =
    \frac{S_j}{N_{\mathrm{iter}}}
    \]

end for

\Return
\[
(\Lambda_1,\dots,\Lambda_n)
\]

\end{algorithmic}
\end{algorithm}

Because the Lloyd map is rotationally invariant on $S^1$. A negative maximal Lyapunov exponent indicates stability in the system.
\begin{figure*}[htbp]
    \centering
    \includegraphics[width=0.8\linewidth]{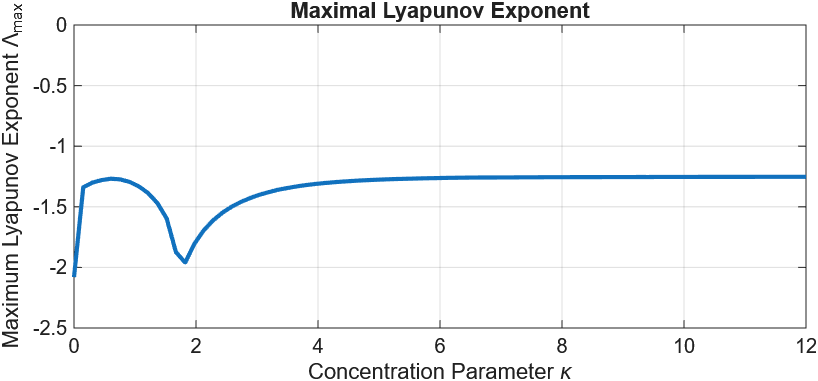}
   \caption{This is the empirical representation of the largest Lyapunov exponent. It always lies in the negative domain, and as the concentration parameter increases, it slowly converges. It suggests stability. }
    \label{fig:3}
\end{figure*}

\newpage
\subsection{Stability-based Lloyd Quantization}

The dynamical analysis of the von Mises density developed above suggests that the Lloyd map may exhibit persistent oscillatory or unstable behaviour for sufficiently large values of the concentration parameter $\kappa$. Motivated by this observation, we propose a stability-aware Lloyd iteration (Algorithm-\ref{alg:SALA}) that adaptively perturbs the codebook whenever non-convergent dynamics are detected. Figure~\ref{fig:4} represents the empirical observation of the residual decay corresponding to the increase in the number of iterations.
\begin{figure*}[htbp]
    \centering
    \includegraphics[width=0.8\linewidth]{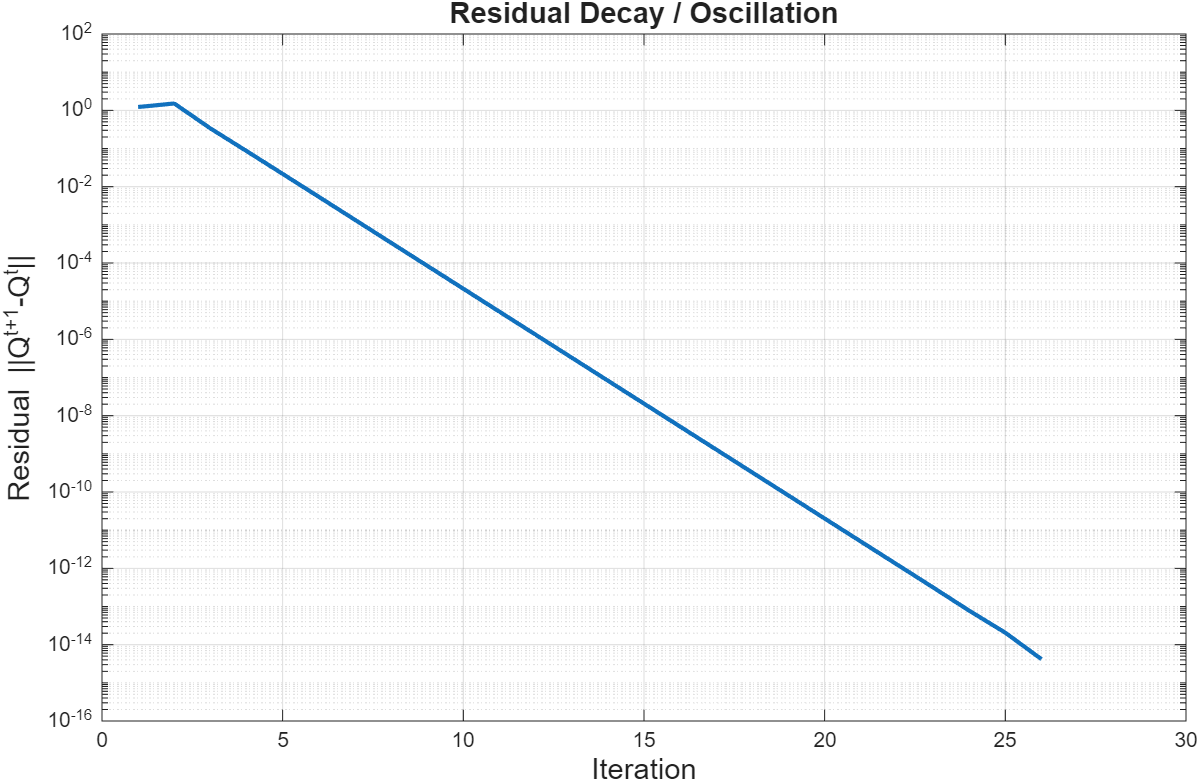}
   \caption{This is the empirical observation corresponding to Algorithm-\ref{alg:SALA}. It represents that as the number of iteration increases the residuals decreases, denoting stability.}
    \label{fig:4}
\end{figure*}

\begin{algorithm}[H]
\caption{Stability-Aware Lloyd Algorithm (SALA)}
\label{alg:SALA}
\begin{algorithmic}[1]

\Require Density $h(\theta)$, concentration $\kappa$,
tolerance $\varepsilon$,
oscillation threshold $\eta$,
maximum iterations $T_{\max}$

\Ensure Approximate stable codebook $Q$

\State Initialize ordered configuration
\[
Q^{(0)}=(q_1^{(0)},\dots,q_n^{(0)})\in\widetilde M_n
\]

\for{$t=0,\dots,T_{\max}-1$}

    \State Construct voronoi cells $R_j^{(t)}$

    \State Compute centroid updates
    \[
    q_j^{(t+1)}
    =
    \arg\min_{q\in\mathbb S^1}
    \int_{R_j^{(t)}}
    d_G(\theta,q)^2 h(\theta)\,d\theta
    \]

    \State Wrap and sort:
    \[
    Q^{(t+1)}
    \leftarrow
    \operatorname{sort}
    \big(
    Q^{(t+1)} \bmod 2\pi
    \big)
    \]

    \State Remove rotational drift:
    \[
    Q^{(t+1)}
    \leftarrow
    Q^{(t+1)}
    -
    \frac1n\sum_{j=1}^n q_j^{(t+1)}
    \]

    \State Compute residual
    \[
    r_t
    =
    \|Q^{(t+1)}-Q^{(t)}\|
    \]

    if {$r_t < \varepsilon$ for $L$ consecutive iterations}
        \State \Return $Q^{(t+1)}$
    
    end if

    if {$t\ge2$}

        \State Compute oscillation indicator
        \[
        \rho_t
        =
        \|Q^{(t+1)}-Q^{(t-1)}\|
        \]

        if {$\rho_t<\varepsilon$ and $r_t>\eta$}

            \State Approximate period-$2$ oscillation

            \State Apply perturbation
            \[
            Q^{(t+1)}
            \leftarrow
            Q^{(t+1)}+\delta\xi
            \]
            where $\xi$ is a small zero-mean random vector.

            \State Re-sort configuration.

        end if

    end if

end for

\State \Return $Q^{(T_{\max})}$

\end{algorithmic}
\end{algorithm}
\section{Conclusion}
\label{sec:sec_6}
In our work, we studied the Lloyd map on the unit circle $\mathbb S^1$ from the perspective of discrete dynamical systems and stability theory. We formulated Lloyd iterations corresponding to rotational symmetry, and analyzed the system by connecting quantization, circulant matrices, and nonlinear dynamics. We performed the explicit derivation of the Jacobian matrix, by considering the equally spaced configuration and showed that the Jacobian matrix is circulant. Our derivation allowed the eigenvalue spectrum to be computed exactly through discrete Fourier modes. 
We studied analytical stability conditions depending on the concentration parameter $\kappa$ of the von Mises density. We derived the condition for the onset of a flip (period-doubling) instability. We plotted stability diagrams, residual evolution diagrams, and Lyapunov spectra. The plots are entirely based on empirical observations from our theory to observe convergence and stability. Overall, our results demonstrated that the Lloyd algorithm on $\mathbb {S} ^1$ exhibits a nonlinear structure that can be studied by using linear stability analysis.
Our work provides a framework for studying the nonlinear dynamical behaviour of centroidal voronoi-type algorithms on compact manifolds. 

Furthermore, there can be several future possiblities based on our work. It includes extensions to higher-dimensional spheres $\mathbb S^d$, non-isotropic densities, manifold-valued quantization problems, stochastic Lloyd dynamics, and the study of nonlinear stability beyond the circular regime. Another research direction is the analysis of quantization dynamics on differential manifolds with nontrivial curvature or topology, where stronger bifurcation phenomena may arise. Furthermore, engineering-based applications are also possible. Circular signal quantization could be a vital area in the field of electrical and communication systems.

 \end{document}